\documentclass[11pt]{article}
\usepackage[T1]{fontenc}
\usepackage{lmodern}
\usepackage[a4paper,margin=27mm]{geometry}
\usepackage{amsmath,amssymb,amsthm,mathtools}
\usepackage{microtype}
\usepackage[colorlinks=true,linkcolor=blue,citecolor=blue,urlcolor=blue]{hyperref}
\hypersetup{
 pdftitle={Irrationality exponents of logarithms of positive rational numbers},
 pdfauthor={Jingwen Liu; Kai Jiang; Pingwen Zhang},
 pdfsubject={Diophantine approximation and moving-centre interpolation},
 pdfkeywords={irrationality exponent, logarithms, interpolation determinants, weighted interpolation}
}
\newtheorem{theorem}{Theorem}
\newtheorem{proposition}[theorem]{Proposition}
\newtheorem{corollary}[theorem]{Corollary}
\newtheorem{lemma}[theorem]{Lemma}
\theoremstyle{remark}
\newcommand{\C}{\mathbb C}
\newcommand{\Z}{\mathbb Z}
\newcommand{\N}{\mathbb Z_{\geq0}}
\newcommand{\wa}[1]{\langle w,#1\rangle}
\newcommand{\ord}{\operatorname{ord}}

\newcommand{\lcm}{\operatorname{lcm}}

\title{Irrationality exponents of logarithms\protect\\
of positive rational numbers}
\author{Jingwen Liu\thanks{\raggedright School of Mathematics and Statistics,
Wuhan University, Wuhan, Hubei, 430072, China.
Email: \href{mailto:jingwen_liu@whu.edu.cn}{\texttt{jingwen\_liu@whu.edu.cn}}.}
\and Kai Jiang\thanks{\raggedright Wuhan Institute for Math \& AI,
Wuhan University, Wuhan, Hubei, 430072, China.
Email: \href{mailto:kaijiang@whu.edu.cn}{\texttt{kaijiang@whu.edu.cn}}.}
\and Pingwen Zhang\thanks{\raggedright School of Mathematics and Statistics,
Wuhan University, Wuhan, 430072, China; and School of Mathematical Sciences,
Peking University, Beijing, 100871, China.
Email: \href{mailto:pzhang@pku.edu.cn}{\texttt{pzhang@pku.edu.cn}}.}}
\date{7 October 2026}
\begin{document}
\begingroup
\makeatletter
\let\@fnsymbol\@arabic
\makeatother
\maketitle
\endgroup
\begin{abstract}
We study rational approximation to logarithms of positive rational
numbers and give a separated-weight determinant argument for
$\mu(\log r)=2$ whenever $r\in\mathbb Q_{>0}\setminus\{1\}$.
The geometric input is an interpolation theorem for logarithmic jets
at points with distinct multiplicative coordinates. Its successive weight
thresholds are independent of the interpolation centres; the eventual
degree threshold may depend on them. A local intersection-length bound
and a curve inequality lead to surjectivity through a blow-up and Serre
vanishing. For a fixed rational argument, this produces a nonzero rational
determinant. Clearing denominators gives an arithmetic lower bound,
while an exact row translation and repeated Taylor indices yield a
contradictory analytic upper bound. The consequences include nonzero
rational linear combinations of such logarithms, rational affine changes,
and logarithms of positive numbers with a rational positive integer power.
We also give a bounded-fibre interpolation extension, an explicit bound
for the final analytic degree, and a counterexample to interpolation
under the volume condition alone.
\end{abstract}
\medskip
\noindent\textbf{Keywords.} Irrationality exponent; logarithms of rational
numbers; interpolation determinants; weighted polynomial interpolation;
blow-ups.\par
\smallskip
\noindent\textbf{2020 Mathematics Subject Classification.}
Primary 11J82; Secondary 11J72, 14C20.

\section{Introduction}\label{sec:introduction}
\hspace*{2em}The theory of irrationality measures has a classical foundation in the
rational approximation of algebraic numbers. Liouville's 1844 estimate bounded the quality of
approximation in terms of the algebraic degree. Subsequent refinements
culminated in Roth's theorem in 1955, which established irrationality
exponent $2$ for every irrational algebraic number \cite{roth}.
This result provides a benchmark for the study of other explicitly given
numbers. The logarithm values considered here require their own
approximation argument; Roth's theorem supplies historical context rather
than an input to our proof.

By the 1980s, quantitative work on logarithm values had developed explicit
constructions of small linear forms with controlled integer coefficients.
In 1987, Rukhadze obtained an improved irrationality measure for $\log 2$
\cite{rukhadze87}, while Rhin developed polynomial constructions related
to Pad\'e approximation to obtain effective irrationality measures
\cite{rhin87}. These methods compare the growth of the coefficients with
the decay of the linear forms, with the arithmetic of their denominators
playing a central role.

Interpolation determinants provided a complementary development.
In 1991, Laurent used the comparison of arithmetic lower bounds and
analytic upper bounds for determinants in a proof of the six exponentials
theorem \cite[Section 6]{laurent91}. His 2000 work on exponential
polynomials developed Taylor expansions and upper estimates for
interpolation determinants \cite{laurent00}. These results made the
determinant itself a useful object for combining analytic estimates with
arithmetic nonvanishing.

Zudilin's 2004 survey \cite{zudilin} brought together integral,
hypergeometric, and simultaneous-approximation methods for logarithm
values, explaining the estimates for $\log 2$, $\pi$, and $\log 3$
available at that time. The present paper follows the determinant approach
and supplies a geometric interpolation statement adapted to centres that
depend on the chosen rational approximations. The required independence
of the weight thresholds from the centres is proved in
Section~\ref{sec:geometry}.

We study rational approximations to the real logarithms of positive
rational numbers. The argument combines moving-centre polynomial
interpolation with arithmetic and analytic estimates for a single
determinant. All logarithms are natural. For an irrational real number $x$, its
irrationality exponent is
\[
 \mu(x)=\sup\left\{\nu>0:
  \left|x-\frac pq\right|<q^{-\nu}
  \text{ for infinitely many }p\in\Z,\ q\in\Z_{>0}\right\}.
\]
Dirichlet's theorem gives $\mu(x)\geq2$ for every irrational real $x$.
The main result asserts that this lower bound is attained throughout
the family considered here.
\begin{theorem}[Logarithms of positive rational numbers]\label{thm:main}
For every $r\in\mathbb Q_{>0}\setminus\{1\}$ and every real $\nu>2$,
there is a positive integer $Q(r,\nu)$ such that
\[
 \left|\log r-\frac pq\right|>q^{-\nu}
 \qquad(p\in\Z,\ q\in\Z,\ q\geq Q(r,\nu)).
\]
Consequently $\log r$ is irrational and $\mu(\log r)=2$.
\end{theorem}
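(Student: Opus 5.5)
The plan is an interpolation-determinant argument in the style of Laurent \cite{laurent91,laurent00}, with centres that move with the approximation. Suppose the bound fails for some $r=a/b$ and some $\nu>2$, so there are infinitely many $p/q$ with $|\log r-p/q|\le q^{-\nu}$. For each such $p/q$ with $q$ large, set up a square matrix whose entries are derivatives of monomials $z^{k}e^{\ell z}$ (equivalently $x^{k}y^{\ell}$ with $y=e^{x}$). The derivatives are taken up to order $T$ and evaluated at points $z=s\log r$, $0\le s<S$. Row by row we then apply the exact translation: replace $s\log r$ by $s\,p/q$ in the polynomial variable while keeping $e^{s\log r}=r^{s}$ exactly rational. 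This is the ``moving centre'': the interpolation points $(sp/q,r^{s})$ have distinct multiplicative coordinates and depend on $p/q$.

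The translated determinant $\Delta'$ has rational entries. Its nonvanishing is where the geometric input enters: the interpolation theorem for logarithmic jets at points with distinct multiplicative coordinates, proved in Section~\ref{sec:geometry}. It says that once the weights $(K,L,T,S)$ exceed thresholds independent of the centres, the evaluation map from polynomials of the prescribed bidegree to the space of jets is surjective. A suitable square minor is therefore invertible, and surjectivity holds once the degree exceeds a threshold that may depend on $p/q$. The order of quantifiers matters here. Fix the weight ratios depending only on $\nu$. Then for each $q$ take the degree parameter $N$ large enough for surjectivity at these centres. Finally let $q\to\infty$ along the sequence, comparing the two estimates with $N$ chosen as a function of $q$; the paper's explicit bound for the final analytic degree supplies this control.

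For the arithmetic lower bound, clear denominators: the entries involve $(p/q)^{j}$, $a^{s\ell}$, $b^{s\ell}$ and factorials from differentiation. A nonzero rational $\Delta'$ then satisfies $|\Delta'|\ge (q^{KT}\,|a b|^{LS\cdot\,\cdot}\,)^{-\text{size}}$, which amounts to $\log|\Delta'|\ge -c_1\,(\text{size})\,(K\log q+LS\log\max(a,b))$. For the analytic upper bound, write $\Delta'$ as the original determinant plus the translation error, controlled by $|s\log r - sp/q|\le Sq^{-\nu}$. The original determinant at the genuine points $s\log r$ is a determinant of values of the entire functions $z^{k}r^{z\ell}$ along an arithmetic progression. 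The trick is to use repeated Taylor indices: expanding all rows about $0$ through Laurent's Schwarz-lemma style estimate gives a large vanishing order. Combined with the $q^{-\nu}$ smallness of the translation, this yields $\log|\Delta'|\le -c_2\,(\text{size})\,\nu\log q+\dots$. With the weights tuned so that the exponent from arithmetic is about $2+\varepsilon$ times $\log q$ per row, the inequality $\nu>2$ makes the upper bound smaller than the lower bound, a contradiction.

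The main obstacle is making the constants in the two estimates match to the sharp exponent $2$ rather than to some larger exponent, since classical measures only reach numbers such as $3.57$. This needs the separated-weight choice: the Taylor index multiplicities and the translation must gain a full $\nu\log q$ per row while costing only $2\log q$ arithmetically. The step I would attack first is the bookkeeping showing that the weight thresholds from Section~\ref{sec:geometry} allow $T/S$ and $K/L$ to be taken in the limiting regime while the determinant stays nonzero. Irrationality follows at once, since a rational $\log r=p_0/q_0$ violates the inequality for $p=mp_0$, $q=mq_0$, and Dirichlet then gives $\mu(\log r)=2$.
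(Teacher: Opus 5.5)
You have the outer shell right: centres $(r^j,\,j\cdot p/q)$ with distinct $Y$-coordinates, exact translation via $e^{j\log r}=r^j$, clearing powers of $b$, a nonzero minor from Theorem~\ref{thm:MI}, and the closing irrationality and Dirichlet step. However, the idea that produces the exponent $2$ is missing, and you explicitly leave that bookkeeping open.

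\emph{The paper does not work with one approximation and one variable.} It first fixes a large dimension $m$. It then chooses $m$ approximations $p_i/q_i$ successively, one for each additive variable $X_i$, with weights $w_i=\lceil\log q_i\rceil$. The centre-independent thresholds of Theorem~\ref{thm:MI} are lower bounds on these $w_i$, i.e.\ on how large each next $q_i$ must be. They are not thresholds on degree or derivative parameters such as your $(K,L,T,S)$.

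\emph{The rows are jets in $t$ and in the transverse directions $u_i$, and these transverse rows are indispensable.} The repeated-index bound $\sum_\rho d_\rho\geq\sum_\lambda\binom{n_\lambda}{2}$ only gains when the number of rows $M$ is a large multiple of $H$ times the number of indices with $\langle w,\lambda\rangle\leq AH$. With curve-direction derivatives only, $M\approx KH/v_0$, and this ratio tends to $0$. The row scaling $q_i^{-\beta_i}$ also produces the average $\bar b$ on both sides. The arithmetic cost $1-\bar b$ is set against the analytic gain $\nu(A(1-\eta)-\bar b)$, and the paper's choice of $\theta,A,\eta$ makes the difference at least $g>0$ when $\nu>2$.

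\emph{The complementary low-index gain forces $m$ to be large.} That gain, $c\eta^2K\theta^m/((m+1)v_0A^m)$ with $K$ now the paper's number of centres, must exceed $1+E$, while $E\geq100K\ell_r/w_0$. Combined with the volume condition $K(w_0/v_0)\theta^m<1$, this forces $A^m<c\eta^2/(100(m+1)K\ell_r)$. That is impossible for $m=1$ with $A\in(2/3,1)$. It becomes achievable only with $m$ large, $K=\lfloor C^m\rfloor$, $w_0=B^{-m}$ and $A<B<\min\{1/C,C\theta\}$, which uses $A^2<\theta$.

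\emph{Your analytic decomposition also falls short.} Splitting into ``original determinant plus translation error'' misses that terms with small nonzero $\lambda-\beta$ gain almost nothing from $|\epsilon_{ji}|\leq Kq_i^{-\nu}$. The paper instead applies its two-case dichotomy (many large translated indices versus many repeated small ones) to every term of the multilinear expansion \eqref{eq:translation}.

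\emph{Your order of quantifiers also fails.} The threshold $n_0$ in Theorem~\ref{thm:MI} depends on the centres, hence on $q$, through the blow-up, the Rees-algebra comparison and Serre vanishing, and it is never estimated. Proposition~\ref{prop:finite-degree} does not bound it; it only says when \eqref{eq:finite-conditions} holds once all parameters and centres are fixed. So you cannot choose $N$ as a function of $q$ and then let $q\to\infty$ while balancing $N$ against $\log q$.

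The paper never lets $q\to\infty$ in the final limit. Column degrees are measured with $w_i\approx\log q_i$, so both bounds \eqref{eq:arith} and \eqref{eq:upper} become homogeneous after division by $MH$. The residual errors are $O(1/w_i)$, $O(1/v_0)$ and $O(K/w_0)$, with $\delta_H\to0$, and the contradiction comes from $H\to\infty$ with every approximation fixed.

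Your estimates $-c_1(\text{size})(K\log q+\dots)$ and $-c_2(\text{size})\nu\log q$ lack this homogeneity. They also omit the positive row contribution $\sum_{\text{rows}}\sum_i\beta_i\log q_i$ and leave $c_1,c_2$ unrelated to the parameters. Even granting nonvanishing, they do not yield a contradiction.
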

The fractions in this statement need not be reduced. The threshold
depends on both $r$ and $\nu$, and no effective value for it is obtained
here. In particular, the assertion does not supply a uniform positive
lower bound of the form $c(r)/q^2$.
Sections~\ref{sec:parameters}--\ref{sec:contradiction}
prove this theorem directly for a fixed arbitrary $r$, using the
interpolation theorem proved in Section~\ref{sec:geometry}.
The connection with the argument for $\pi$ in \cite{source} is provided
by the exponential map. For $r>0$, its fibre over $r$ is
\[
 \{z\in\C:e^z=r\}=\log r+2\pi i\Z.
\]
Here $\log r$ is a real preimage of $r$, and $2\pi i$ is the period
separating its preimages. The identity
$e^{j\log r+2\pi i k}=r^j$ for $j,k\in\Z$ shows that $j\log r$
maps to the distinct rational values $r^j$, whereas $2\pi i k$ maps to $1$.

Both constructions use the curve $z\mapsto(e^z,z,\ldots,z)$.
Writing $y_0=e^{z_0}$ and $z=z_0+\log(1+t)$ gives the common local model
\[
 Y=y_0(1+t),\qquad
 X_i=z_0+\log(1+t)+u_i\quad(1\leq i\leq m).
\]
The $u_i$ are transverse displacements; along the curve,
$dX_i=dY/Y$. Replacing $z_0=j\log r$ by $jp_i/q_i$, or
$z_0=2\pi i j$ by $2ij p_i/q_i$ in the additive coordinates, gives
the respective interpolation centres. The shared local model explains
the parallel Taylor expansions and analytic determinant estimates.

The geometric input depends on the centre configuration. The present
proof adapts the separated-weight method of \cite[Section 2]{source}.
Since the coordinates $Y=r^j$ are distinct, a curve with constant $Y$
meets at most one centre, allowing arbitrary additive coordinates in
Theorem~\ref{thm:MI}. For $\pi$, all centres lie in $Y=1$; interpolation
requires additive-coordinate separation and $K\theta^m<1$, as in the
case $\kappa=K$ of Appendix~\ref{app:fibres}. These hypotheses must be
checked separately despite the common analytic model. For $r=a/b$,
clearing powers of $b$ introduces an additional arithmetic cost,
accounted for in the parameter choices below.

\paragraph{Proof outline.}
The argument constructs a nonzero rational determinant and shows that
its arithmetic lower bound is incompatible with its analytic upper bound.
The steps, and their order of dependence, are as follows.
\begin{enumerate}
\item \textit{Assume excessively good approximations and fix the parameters.}
Fix $r=a/b$ and a rational $\nu>2$, and suppose that
$|\log r-p/q|\leq q^{-\nu}$ holds for arbitrarily large denominators.
Section~\ref{sec:parameters} first chooses the dimension $m$, the number
$K$ of centres, and the auxiliary parameters. It then selects $m$
approximations $p_i/q_i$ successively, with separated weights
$w_i=\lceil\log q_i\rceil$. The weight thresholds in the moving-centre
interpolation theorem are independent of the centres, so this choice
does not depend circularly on the selected approximations.

\item \textit{Use interpolation to obtain a nonzero determinant.}
Use the centres with coordinates $Y=r^j$ and $X_i=jp_i/q_i$, for
$0\leq j<K$, and form the matrix of prescribed local coefficients of
weighted monomials.
Theorem~\ref{thm:MI}, proved in Section~\ref{sec:geometry}, makes this
matrix surjective for every sufficiently large admissible degree $H$.
Section~\ref{sec:arithmetic} therefore selects a nonzero square minor
$\Delta_H$. This is the role of MI in the determinant argument.

\item \textit{Bound the determinant from below by arithmetic.}
All matrix entries are rational. The row and column scalings in
Section~\ref{sec:arithmetic}, including the factor clearing powers of
$b$, turn $\Delta_H$ into a nonzero integer of absolute value at least
$1$. Tracking these factors gives a quantitative lower bound for
$|\Delta_H|$.

\item \textit{Bound the same determinant from above by analysis.}
The identity $e^{j\log r}=r^j$ relates the centres to the analytic curve
$(e^z,z,\ldots,z)$. Section~\ref{sec:analytic} expands the rows around
this curve. In each determinant term, either most transverse indices are
large, giving a large total approximation-error weight, or many indices
belong to a small finite set. In the latter case, repetition forces high
Taylor orders in every nonzero Taylor contribution, and Cauchy's estimate
gives the required decay.

\item \textit{Compare the bounds and conclude.}
Section~\ref{sec:contradiction} shows that the parameter choices leave a
strict positive gap for every rational $\nu>2$. With all approximations,
weights, and centres fixed, let $H$ tend to infinity through the
admissible degrees. The remaining errors tend to zero, and the upper
bound becomes smaller than the lower bound. This contradiction excludes
arbitrarily large exceptional denominators. Choosing a rational exponent
between $2$ and any given real exponent extends the conclusion to every
real $\nu>2$. Exact rational representations would contradict the same
bound, so $\log r$ is irrational; Dirichlet's theorem then supplies the
opposite inequality $\mu(\log r)\geq2$.
\end{enumerate}

\paragraph{Organization.}
Part~\ref{part:interpolation} introduces the weighted polynomial spaces
and proves the moving-centre interpolation theorem.
Part~\ref{part:main-proof} proves Theorem~\ref{thm:main}: parameters are
chosen first, followed by the arithmetic lower bound, the analytic upper
bound, and their final comparison.
Part~\ref{part:consequences} derives the consequences for rational
linear combinations, affine changes, and rational powers, and discusses
the scope and effectivity of the argument.
Appendices~\ref{app:degree}--\ref{app:volume-control} collect the
finite-degree estimate, the bounded-fibre interpolation extension, and
a counterexample explaining the need for weight separation.

\clearpage
\part{The interpolation theorem}\label{part:interpolation}
\section{Moving-centre interpolation}\label{sec:geometry}
\paragraph{Notation.}
For positive rational weights $W=(w_0,w_1,\ldots,w_m)$, set
\[
 \wa\alpha=\sum_{i=1}^m w_i\alpha_i,\qquad
 V=(v_0,v_1,\ldots,v_m)=(v_0,w_1/\theta,\ldots,w_m/\theta).
\]
Multi-indices have nonnegative integer entries, and their comparisons
are coordinatewise. Write $|\alpha|=\sum_i\alpha_i$ and
$\binom\alpha\beta=\prod_i\binom{\alpha_i}{\beta_i}$.
Let $\mathcal P_W(H)$ be the span of the monomials $Y^hX^\alpha$ with
$w_0h+\wa\alpha\leq H$. Let $\mathcal J_V(H)$ be the quotient of
$\C[[t,u_1,\ldots,u_m]]$ by the monomial ideal of $V$-weight at least $H$.
Thus its retained monomials are precisely $t^su^\beta$ with
$v_0s+\wa\beta/\theta<H$.
For a nonzero formal series $f=\sum_{s,\beta}a_{s,\beta}t^su^\beta$,
its $V$-order is
\[
 \operatorname{ord}_V(f)
 =\min_{a_{s,\beta}\ne0}\left(v_0s+\sum_{i=1}^mv_i\beta_i\right),
 \qquad \operatorname{ord}_V(0)=+\infty.
\]
All asymptotic statements in a degree variable keep the dimension and
weights fixed; all centres are fixed as well before the final degree
limit is taken.

\begin{theorem}[MI: moving-centre interpolation]\label{thm:MI}
Fix positive integers $m,K$, positive rational numbers $w_0,v_0$, and
a rational number $0<\theta<1$, such that
\begin{equation}\label{eq:volume}
 K(w_0/v_0)\theta^m<1.
\end{equation}
There are successive lower thresholds for the positive rational weights
$w_1,\ldots,w_m$, depending only on these fixed data and earlier weights,
with the following property. For any pairwise distinct
$y_0,\ldots,y_{K-1}\in\C^*$ and any $c_{ji}\in\C$, the map
\begin{equation}\label{eq:MI}
 \mathcal P_W(H)\longrightarrow\bigoplus_{j=0}^{K-1}\mathcal J_V(H),
 \qquad P\longmapsto
 \left(P\bigl(y_j(1+t),(c_{ji}+u_i+\log(1+t))_{i=1}^m\bigr)\right)_j
\end{equation}
is onto for every $H=nR$ with $n\geq n_0$. Here the positive integer
$R$ depends only on $W,V$; the threshold $n_0$ may depend on the centres.
The weight thresholds are independent of all centres.
\end{theorem}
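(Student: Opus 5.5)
We will prove Theorem~\ref{thm:MI} by induction on $m$, following the separated-weight strategy of \cite[Section 2]{source}. We first pass to a graded setting. Let $R$ be a common denominator making all weights and all ratios $w_i/v_i=\theta$ integral after scaling. The spaces $\mathcal P_W(nR)$ are then the graded pieces of a section ring $\bigoplus_n H^0(\mathbb P_W,\mathcal O(nR))$ on the weighted projective space $\mathbb P_W=\mathbb P(1,w_0,w_1,\ldots,w_m)$ (a homogenizing variable is added). The map \eqref{eq:MI} is restriction to the fat points $Z_j$ cut out at the $K$ centres $(y_j,c_{j1},\ldots,c_{jm})$. Their ideals are the pullbacks of the $V$-weighted monomial ideals under the local coordinates $t=Y/y_j-1$, $u_i=X_i-c_{ji}-\log(1+t)$. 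These are analytic coordinates at the centre, because the change of variables is triangular with identity linear part in $u$.

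Surjectivity for all large $n$ is equivalent to the vanishing of $H^1(\mathcal O(nR)\otimes\mathcal I_Z)$. We will obtain this from Serre vanishing on a weighted blow-up $\pi:\widetilde X\to\mathbb P_W$ at the $K$ points, taken with weights $V$ in the local coordinates $(t,u)$. Let $E_j$ be the exceptional divisors. We must show that $L_n=\pi^*\mathcal O(nR)-nR\sum_j E_j$ is ample (more precisely, that $L_1$ is ample up to a fixed finite correction). Then $R^1\pi_*$ of the relevant sheaves vanishes for large $n$, $\pi_*\mathcal O(-nR E)=\mathcal I_Z^{(nR)}$ by the monomial description of $\mathcal J_V$, and Serre vanishing gives the surjectivity. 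The threshold $n_0$ is allowed to depend on the centres, so ampleness is all we need.

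By Seshadri/Nakai--Moishezon, ampleness is checked on the top self-intersection and on all subvarieties. The top self-intersection is $\mathrm{vol}\,\mathcal O(1)\cdot\bigl(1-K\prod_i (w_i/v_i)\cdot w_0/v_0\bigr)$ up to normalization, which equals $1-K(w_0/v_0)\theta^m>0$ by \eqref{eq:volume}.

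The main obstacle is controlling subvarieties, above all curves $C$. We need $\mathcal O(1)\cdot C>\sum_j \mathrm{mult}_{V,j}(C)$, where the weighted multiplicity is measured by the local intersection length in the coordinates $(t,u)$. Here we will use the key local intersection-length bound. If $C$ is not contained in $\{Y=\text{const}\}$, then $\deg_Y C\cdot w_0$ already dominates its $t$-contribution at every centre, up to the factor $w_0/v_0<1$ coming from \eqref{eq:volume}. If $C$ lies in $Y=y_j$, it meets only the single centre $j$, since the $y_j$ are distinct. There the $u$-weights $v_i=w_i/\theta$ compare with the degrees $w_i$ with the uniform factor $\theta<1$, so the bound holds. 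This is where distinctness of the $y_j$ removes every dependence on the $c_{ji}$.

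For higher-dimensional subvarieties we induct. Project away from the last coordinate $X_m$. Because $w_m$ is chosen after $w_1,\ldots,w_{m-1}$ and is taken large relative to them, any subvariety not dominating the $X_m$-direction is controlled by the $(m-1)$-dimensional statement applied to a hyperplane section. Subvarieties dominating that direction gain a factor $\theta$ from the large weight. The successive threshold on $w_m$ depends only on $m,K,w_0,v_0,\theta$ and the earlier weights, because every estimate above is a degree inequality that is uniform in the centres.
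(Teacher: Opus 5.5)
Your reduction to ampleness of $A-E$ on a blow-up at the centres matches the paper, and so does your argument for curves inside a fibre $Y=y_j$. However, the step for curves on which $Y$ is nonconstant has a genuine gap, and that step is the heart of the theorem. Counting zeros of $Y-y_j$ centre by centre gives only $\sum_Ph_P\le K\deg(Y|_\Gamma)/v_0$, while $d_W(\Gamma)\ge\deg(Y|_\Gamma)/w_0$. Your argument therefore needs $(1+\sigma)Kw_0\le v_0$. This does not follow from \eqref{eq:volume}, which gives only $Kw_0/v_0<\theta^{-m}$; even $w_0/v_0<1$ is not implied. It also cannot hold in the intended application, where $v_0=2K\theta^mw_0$ and so $Kw_0/v_0=1/(2\theta^m)$.

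The transverse contacts cannot be rescued by a similar zero count. The coordinate $u_{ji}=X_i-c_{ji}-\log(Y/y_j)$ is transcendental and differs at each centre, so truncating the logarithm and counting zeros per centre again loses the factor $K$. The example in Appendix~\ref{app:volume-control}, a curve with nonconstant $Y$ and $w_0/v_0=10/9$, shows that the volume inequality alone cannot control such curves.

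The missing idea is the paper's zero estimate:
\begin{itemize}
\item The volume condition enters through a dimension count, not through $(A-E)^{m+1}>0$. This count produces an auxiliary $F_N$ of $W$-degree at most $N$ with $V$-order at least $(1+3\sigma)N$ at all centres.
\item The zero--pole comparison forces every derivative of cost at most $\sigma N$ to vanish on a violating curve. Here the derivatives are taken along the centre-independent commuting frame $D_0=Y\partial_Y+\sum_i\partial_{X_i}$, $D_i=\partial_{X_i}$.
\item A stabilising chain of components of these zero sets, together with Lemma~\ref{lem:length} and the separation \eqref{eq:separation}, forces either $dY=0$ or $dX_i=dY/Y$ on the component for some $i$.
\item A residue argument then makes $Y$ constant.
\end{itemize}
Only after this does your fibre argument apply. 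In particular, weight separation is needed to handle curves, not for an induction over higher-dimensional subvarieties.

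Your ampleness criterion also has an unfilled hole. Nakai--Moishezon requires $(A-E)^{\dim V}\cdot V>0$ for every subvariety $V$. The sketch of projecting away from $X_m$ and passing to a hyperplane section is not an argument. Strict positivity on each individual curve would not suffice either. The paper avoids this by proving \eqref{eq:curve} with the uniform margin $1+\sigma$. Then $A-(1+\sigma)E$ is nef, which is checked on curves only. Finally, $A-E$ is a positive combination of this nef class and an ample class $aA-E$, hence ample. Neither the top self-intersection nor any higher-dimensional subvariety enters.
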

We prove this theorem in the next three subsections. No distinctness
condition is imposed on the $c_{ji}$.
\subsection{The local multiplicity input}
We begin with the local multiplicity comparison underlying the
separated-weight argument. We use separate names for degree weights and derivative
costs to prevent an implicit identification of the two.

\begin{lemma}[Uniform local length comparison]\label{lem:length}
In affine $d$-space, assign positive rational degree weights
$\rho_1,\ldots,\rho_d$ to the coordinates. On an open set let
$D_1,\ldots,D_d$ be commuting holomorphic vector fields which form a frame,
with positive costs $\kappa_1,\ldots,\kappa_d$.
Fix $\epsilon>0$.
Suppose $f_1,\ldots,f_r$ are polynomials of weighted degree at most $N>0$,
and $Z$ is a codimension-$k$ irreducible component of their common zeros.
If
\[
 D^\gamma f_l\big|_Z=0
 \quad\text{whenever}\quad\sum_b\kappa_b\gamma_b<\epsilon N,
\]
then, for every coordinate normal basis $A$ and every frame normal basis $B$
at a common general smooth point of $Z$,
\begin{equation}\label{eq:local-sharp}
 \prod_{a\in A}\rho_a\leq k!\epsilon^{-k}\prod_{b\in B}\kappa_b.
\end{equation}
A normal basis means that the indicated vectors map to a basis modulo
$T_xZ$. The constant is independent of the polynomials, $Z$, the point,
and the sizes of the local coordinate neighbourhoods.
\end{lemma}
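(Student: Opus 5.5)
The plan is to prove \eqref{eq:local-sharp} by computing one local invariant at a general point $x\in Z$ in two ways. The invariant is the intersection multiplicity of $Z$ in a complete intersection cut out by $k$ general combinations of the $f_l$. The \emph{polynomial} description, with weighted degrees $\le N$ and weights $\rho_a$, will bound it from above. The \emph{differential} description, vanishing to frame-weighted order $\epsilon N$ with costs $\kappa_b$, will bound it from below. Dividing by $N^k$ removes all dependence on $N$, on the polynomials and on the neighbourhood sizes.

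\textbf{Step 1 (reduction to a complete intersection).} I would take $k$ general linear combinations $g_1,\ldots,g_k$ of $f_1,\ldots,f_r$. Grouping the $f_l$ by weighted degree and multiplying by weighted monomials if needed keeps each $g_i$ of weighted degree $\le N$. By the standard general-position argument, $Z$ is still an irreducible component of $\{g_1=\cdots=g_k=0\}$. Linear combinations inherit the hypothesis, so $D^\gamma g_i|_Z=0$ whenever $\sum_b\kappa_b\gamma_b<\epsilon N$. I then fix a general smooth point $x$ of $Z$, at which $Z$ is a local complete intersection component of multiplicity $e:=e_Z(g_1,\ldots,g_k)$.

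\textbf{Step 2 (upper bound from degrees).} Let $A$ be a coordinate normal basis at $x$. I would slice by the affine $k$-plane $L$ through $x$ spanned by the coordinate directions in $A$, with the other coordinates frozen at general values near $x$. Then $L$ meets $Z$ transversally at isolated points and $e$ equals the local intersection number at $x$ of the $g_i|_L$. Each $g_i|_L$ is a polynomial in the $k$ coordinates $x_a$ ($a\in A$) of weighted degree $\le N$. Weighted Bezout on the weighted projective closure of $L$ bounds the sum of all local intersection numbers by $N^k/\prod_{a\in A}\rho_a$. This gives
\[ e\le N^k\Big/\prod_{a\in A}\rho_a . \]

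\textbf{Step 3 (lower bound from vanishing).} Let $B$ be a frame normal basis at the same point. Because the $D_b$ commute and form a frame, there are local holomorphic coordinates $s_1,\ldots,s_d$ near $x$ with $D_b=\partial/\partial s_b$. On $Z$ the hypothesis says that every Taylor coefficient of $g_i$ in the normal coordinates $(s_b)_{b\in B}$ of $\kappa$-weight $<\epsilon N$ vanishes. Restricting to the transversal slice $\{s_b=\mathrm{const}:\ b\notin B\}$ then places each $g_i$ in the monomial ideal $\mathfrak a=\{s^\gamma:\sum_{b\in B}\kappa_b\gamma_b\ge\epsilon N\}$ of $\mathcal O_{\C^k,x}$. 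By monotonicity of multiplicity, $e\ge e(\mathfrak a)$. The multiplicity of a monomial ideal is $k!$ times the volume of the complement of its Newton polyhedron, namely the simplex $\{\sum\kappa_b\gamma_b<\epsilon N\}$, whose volume is $(\epsilon N)^k/(k!\prod\kappa_b)$. This even gives $e\ge(\epsilon N)^k/\prod_{b\in B}\kappa_b$. The weaker form $e\ge(\epsilon N)^k/(k!\prod\kappa_b)$, which is just a colength count of $\mathfrak a$ with the $k!$ retained, already suffices.

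\textbf{Step 4 (comparison).} Combining Steps 2 and 3 yields $\prod_{a\in A}\rho_a\le k!\,\epsilon^{-k}\prod_{b\in B}\kappa_b$, and $N$ cancels. Every constant used is universal: Bezout and the simplex volume do not see the particular polynomials, the point, or the size of the neighbourhood.

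\textbf{Main obstacle.} I expect Step 3's passage from ``derivatives vanish on $Z$'' to membership in $\mathfrak a$ on a \emph{transversal} slice to be the delicate point. The frame-normal directions $B$ and the coordinate directions $A$ are different slices. I must make sure that the multiplicity $e$ computed in Step 2 along the $A$-slice is the same as that along the $B$-slice, which should hold because $e$ is intrinsic to the component $Z$ at a general point. The genericity in Step 1 must also be chosen so that it does not destroy the order of vanishing, and must be compatible with both slices at a common general point. A second point of care is that weighted Bezout is applied with only $\le N$ as an upper bound on each degree; here I would simply use monotonicity of the weighted Bezout bound in the degrees.
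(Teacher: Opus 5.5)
Your argument is correct, but it is organized differently from the paper's. The paper stays on a single slice throughout. It takes the coordinate slice $S$ through $x$ (coordinates outside $A$ frozen at their values at $x$) and builds flow-box coordinates $(z,b)$ with $D_b=\partial/\partial b$ and $Z=\{b=0\}$. It then observes that $b|_S$ are analytic coordinates on $S$, with $z=z(b)$ there. The vanishing of the coefficient functions $f_{l,\alpha}(z)$ along all of $Z$ then puts $J=(f_l|_S)$ inside the monomial ideal of $b$-weight $\geq\epsilon N$. Only afterwards does it pick $k$ combinations, and only locally on $S$ (isolated common zero at $x$); it uses $J'\subseteq J$ and bounds $\dim_\C\mathcal O/J'$ by the isolated-zero Bezout bound after the substitution $x_a=\lambda_a+z_a^{d_0\rho_a}$. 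It therefore never compares lengths on two slices and never needs $Z$ to be a component of a global complete intersection.

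Your version computes the lower bound on the flat $B$-slice, where it needs only the vanishing of $D^\gamma g_i$ at the single point $x$, and the upper bound on the $A$-slice. The price is the global general-position step and the slice-independence of the local length, which you flag but do not prove. That fact does hold at every smooth point of $Z$ off the other components of $V(g)$. There $\mathcal O_x/(g_1,\dots,g_k)$ is Cohen--Macaulay with unique minimal prime $\mathfrak p=I(Z)_x$. So for any $\sigma_1,\dots,\sigma_{d-k}$ restricting to local coordinates on $Z$, the associativity formula gives $\dim_\C\mathcal O_x/(g,\sigma)=\ell\bigl((\mathcal O_x/(g))_{\mathfrak p}\bigr)$, independent of the transverse slice.

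Three small repairs are needed:
\begin{enumerate}
\item Multiplying by weighted monomials is unnecessary, since linear combinations already have weighted degree $\leq N$; it would also change the zero set.
\item $L$ must pass through $x$, so the frozen coordinates are those of $x$, not general values near $x$.
\item $V(g)\cap L$ may have positive-dimensional components away from $x$, even for general translates of $L$. You therefore need the isolated-zero form of weighted Bezout, exactly as the paper uses.
\end{enumerate}
Your Samuel-multiplicity remark, which removes the factor $k!$, is valid and would apply equally to the paper's $J'$.
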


\begin{proof}
Choose $x$ smooth on $Z$, away from every other component, where the two
normal-basis conditions hold. Flow from $Z$ along the fields in $B$.
Their commutativity gives local coordinates $(z,b)$ in which these fields
are $\partial/\partial b_j$ and $Z$ is $b=0$. The Taylor expansion is
$f_l(z,b)=\sum_\alpha f_{l,\alpha}(z)b^\alpha$, with
$f_{l,\alpha}\equiv0$ for $\sum_{b\in B}\kappa_b\alpha_b<\epsilon N$.

Take the affine slice $S$ obtained by fixing coordinates outside $A$.
It is transverse to $Z$. The derivative of the map $b$ has kernel $T_xZ$,
so its restriction to $T_xS$ is an isomorphism. Thus $b|_S$ are analytic
local coordinates and the other flow coordinates have the form $z=z(b)$.
Every surviving summand
$f_{l,\alpha}(z(b))b^\alpha$ still has weight at least $\epsilon N$:
the coefficient is a power series with nonnegative exponents. No assumption
on coefficient magnitudes or convergence radii is used for this conclusion.

Let $\mathcal O$ be the analytic local ring of $S$ at $x$, let
$J=(f_1|_S,\ldots,f_r|_S)$, and let $\mathfrak a$ be the monomial ideal of
$b$-weight at least $\epsilon N$. We have $J\subseteq\mathfrak a$, so
\[
 \dim_\C(\mathcal O/J)\geq\dim_\C(\mathcal O/\mathfrak a)
 =\#\{\alpha\in\N^k:\textstyle\sum_{b\in B}\kappa_b\alpha_b<\epsilon N\}
 \geq\frac{(\epsilon N)^k}{k!\prod_{b\in B}\kappa_b}.
\]
For the last inequality, the simplex with the displayed strict inequality
is covered by the disjoint unit cubes indexed by the coordinatewise floors
of its points. Every such index is admissible. The number of cubes is at
least the volume of the simplex.

The zero of $J$ is isolated on $S$, hence $J$ is maximal-primary.
Choose $k$ constant linear combinations $g_1,\ldots,g_k$ of the $f_l|_S$
which still have an isolated common zero at $x$. To justify the choice,
at each step avoid the finitely many positive-dimensional components of
the previously chosen equations. None is contained in the common zeros
of all generators of $J$. The forbidden coefficient vectors form finitely
many proper linear subspaces over the infinite field $\C$.
With $J'=(g_1,\ldots,g_k)\subseteq J$, it follows that
$\dim_\C(\mathcal O/J)\leq\dim_\C(\mathcal O/J')$.

The restricted polynomials $g_i$ have degree at most $N$ in the variables
indexed by $A$ with weights $\rho_a$; fixing the other coordinates cannot
increase this degree. Choose an integer $d_0$ for which every $d_0\rho_a$
is a positive integer, and substitute $x_a=\lambda_a+z_a^{d_0\rho_a}$.
Choose $\lambda_a\ne x_a(x)$, so all preimages of $x$ are unramified.
There are $\prod_{a\in A}d_0\rho_a$ of them, and their local intersection
lengths equal that at $x$. Each substituted polynomial has ordinary degree
at most $d_0N$. The affine Bezout bound for isolated zeros, counted with
local intersection length, gives
\[
 \dim_\C(\mathcal O/J')\leq
 \frac{(d_0N)^k}{\prod_{a\in A}d_0\rho_a}
 =\frac{N^k}{\prod_{a\in A}\rho_a}.
\]
Other components elsewhere do not invalidate this isolated-zero bound.
Polynomial, analytic, and completed local rings give the same length here:
all finite-colength quotients are already determined modulo a power of the
maximal ideal. Combining the bounds and cancelling $N^k$ proves the lemma.
\end{proof}

For the intersection-length convention and the isolated-zero Bezout bound
used above, see Mondal~\cite[Section IV.4.2 and Corollary VIII.3]{mondal}.
We will use the following weaker consequence
\begin{equation}\label{eq:local}
 \prod_{a\in A}w_a\leq2k!\epsilon^{-k}\prod_{b\in B}v_b.
\end{equation}
In our application the flows are explicit: $D_0$ acts by
$(Y,X)\mapsto(e^sY,X+s\mathbf1)$, and $D_i$ translates $X_i$.
There is therefore no need for a uniform quantitative flow theorem.
The component $Z$ and the chosen point may change with $N$; every quantity
left in \eqref{eq:local-sharp} is still one of the stated numerical inputs.

\begin{lemma}[The local charts share one global frame]\label{lem:moving-chart}
For every $a_j=(y_j,c_{j1},\ldots,c_{jm})$ with $y_j\ne0$, the formal
coordinates
\[
 t_j=Y/y_j-1,\qquad
 u_{ji}=X_i-c_{ji}-\log(Y/y_j)
\]
give
\[
 D_0=(1+t_j)\partial_{t_j},\qquad
 D_i=\partial_{u_{ji}},\qquad
 D_0=Y\partial_Y+\sum_i\partial_{X_i},\quad D_i=\partial_{X_i}.
\]
The fields on the right are the same at every centre. They preserve
$\mathcal P_W(N)$, and differentiation by $D_a$ decreases local $V$-order
by at most $v_a$. These statements require no bound on the sizes or mutual
distances of the centres.
\end{lemma}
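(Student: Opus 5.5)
The plan is a direct chain-rule computation in the formal coordinates, followed by two short bookkeeping checks. Since $Y=y_j(1+t_j)$ and $X_i=c_{ji}+u_{ji}+\log(1+t_j)$, the substitution $(Y,X)\leftrightarrow(t_j,u_j)$ is an invertible formal (in fact analytic near the centre) change of variables. This is the same substitution that appears in \eqref{eq:MI}. So it suffices to compute how the global fields act on the new coordinate functions.

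First I apply $D_0=Y\partial_Y+\sum_i\partial_{X_i}$ to each coordinate. This gives
\[
D_0t_j=Y/y_j=1+t_j,\qquad D_0u_{ji}=1-Y\cdot\frac1Y=0.
\]
Next I apply $D_i=\partial_{X_i}$, which gives $D_it_j=0$ and $D_iu_{jl}=\delta_{il}$. A derivation is determined by its values on a coordinate system, so $D_0=(1+t_j)\partial_{t_j}$ and $D_i=\partial_{u_{ji}}$ in the chart at $a_j$. The right-hand expressions involve neither $y_j$ nor $c_{ji}$, so they are the same fields at every centre. Equivalently, $D_0$ generates the flow $(Y,X)\mapsto(e^sY,X+s\mathbf1)$ and $D_i$ generates translation of $X_i$, as noted after \eqref{eq:local}.

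The preservation of $\mathcal P_W(N)$ is a monomial check. We have
\[
D_0(Y^hX^\alpha)=hY^hX^\alpha+\sum_i\alpha_iY^hX^{\alpha-e_i},
\]
and each term has weight $w_0h+\wa\alpha-w_i\leq w_0h+\wa\alpha$. Likewise $D_i$ lowers the weight by $w_i$ or annihilates the monomial.

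For the order statement, I write $f=\sum a_{s,\beta}t^su^\beta$. Then
\[
D_0(t^su^\beta)=st^{s-1}u^\beta+st^su^\beta,
\]
whose terms have $V$-order $\geq v_0s+\sum_iv_i\beta_i-v_0$, and $D_i$ lowers the $V$-weight of each monomial by exactly $v_i$. Hence $\ord_V(D_af)\geq\ord_V(f)-v_a$. The only point needing care is that these are formal identities, valid monomial by monomial, so no convergence or size bounds enter; this is why the argument is uniform in the centres.

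I expect no real obstacle. The one subtle step is justifying that the derivation identities pass to formal power series, including $\log(Y/y_j)=\log(1+t_j)$. This holds because the chain rule for derivations on $\C[[t,u]]$ is continuous in the $(t,u)$-adic topology.
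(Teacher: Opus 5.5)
Your proposal is correct and follows essentially the same route as the paper: you compute $D_0t_j=1+t_j$, $D_0u_{ji}=0$, $D_it_j=0$, $D_iu_{jl}=\delta_{il}$ directly, and then check preservation of $\mathcal P_W(N)$ and the $V$-order loss monomial by monomial. The paper justifies invertibility of the chart by noting that the coordinate change has Jacobian determinant $1/y_j\neq0$ at the centre; your explicit inverse formulas give the same conclusion.
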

\begin{proof}
The Jacobian of the displayed coordinate change at the centre has
determinant $1/y_j\ne0$. Direct differentiation gives
$D_0t_j=1+t_j$, $D_0u_{ji}=0$, $D_it_j=0$, and
$D_iu_{jl}=\delta_{il}$. The claimed order losses follow on each monomial
$t_j^s u_j^\beta$. The field $Y\partial_Y$ preserves monomial degree,
whereas every $\partial_{X_i}$ decreases it.
Alternatively, the affine change
$(\widetilde Y,\widetilde X)\mapsto(y_j\widetilde Y,c_j+\widetilde X)$
and its inverse preserve each weighted-degree polynomial space.
All logarithms here are formal expansions with constant term zero at
$Y/y_j=1$; no choice of a global branch of $\log Y$ is involved.
There is a different local chart for each centre. This lemma does not
assert the existence of one global change sending all $y_j$ to $1$.
\end{proof}

\subsection{Curves through moving centres}\label{subsec:moving-curves}
Choose rational $\sigma>0$ with
$(1+3\sigma)^{m+1}K(w_0/v_0)\theta^m<1$ and $(1+\sigma)\theta<1$.
Put $\ell_0=m+2$ and
$C_* =\max_{1\leq k\leq m}2k!(\ell_0/\sigma)^k$.
Choose the weights successively so that
\begin{equation}\label{eq:separation}
 \prod_{a\in A}w_a>C_*\prod_{b\in B}v_b
\end{equation}
whenever $A,B\subset\{0,\ldots,m\}$ have the same cardinality between
$1$ and $m$, and the largest positive index where they differ belongs to $A$.
At this largest index, the ratio in \eqref{eq:separation} is proportional
to its $w_i$; larger common indices contribute only the fixed ratio $\theta$.
Thus these are successive lower thresholds independent of all centres.
More explicitly, if $h$ is that largest differing positive index, then
\[
 \frac{\prod_{a\in A}w_a}{\prod_{b\in B}v_b}
 =w_h\theta^{|B\cap\{1,\ldots,m\}|}
 \frac{w_0^{\mathbf1_{0\in A}}}{v_0^{\mathbf1_{0\in B}}}
 \frac{\prod_{a\in A\setminus B,\ 0<a<h}w_a}
      {\prod_{b\in B\setminus A,\ 0<b<h}w_b}.
\]
For each $h$ the coefficient of $w_h$ is known before $w_h$ is chosen.
Taking the maximum of the finitely many resulting thresholds proves the
required order of quantifiers, rather than assuming it from ``large weights''.

For an irreducible algebraic curve $\Gamma$ meeting the affine chart define
on its smooth projective model
\[
 d_W(\Gamma)=\sum_P\max\left(0,-\frac{\ord_PY}{w_0},
              -\frac{\ord_PX_1}{w_1},\ldots,-\frac{\ord_PX_m}{w_m}\right).
\]
At a branch through $(y_j,c_{j1},\ldots,c_{jm})$, set
\[
 h_P=\min\left(\frac{\ord_P(Y/y_j-1)}{v_0},
  \left(\frac{\ord_P(X_i-c_{ji}-\log(Y/y_j))}{v_i}\right)_{i=1}^m\right).
\]
The logarithm is formal at this centre. We claim the curve bound
\begin{equation}\label{eq:curve}
 d_W(\Gamma)\geq(1+\sigma)\sum_{P\text{ through a centre}}h_P.
\end{equation}
An identically zero coordinate has order $+\infty$ and contributes no pole.
The contact $h_P$ is finite: if every entry in its defining minimum were
infinite, $Y$ and every $X_i$ would be constant on the curve.
Curves meeting no centre satisfy the bound immediately.
Suppose the bound fails. To obtain the auxiliary polynomial, note that
weighted simplex counting gives
\[
 \dim\mathcal P_W(N)
 =\frac{N^{m+1}}{(m+1)!\,w_0\prod_iw_i}+O(N^m),\qquad
 \dim\mathcal J_V(T)
 =\frac{T^{m+1}}{(m+1)!\,v_0\prod_iv_i}+O(T^m).
\]
These estimates follow by covering the positive weighted simplex with
unit cubes; the boundary cubes lie in a layer of bounded width and
volume $O(N^m)$ or $O(T^m)$, respectively.
Requiring $V$-order at least $(1+3\sigma)N$ at every centre imposes at most
$K\dim\mathcal J_V((1+3\sigma)N)$ homogeneous linear conditions.
The ratio of this number to $\dim\mathcal P_W(N)$ tends to
$(1+3\sigma)^{m+1}K(w_0/v_0)\theta^m<1$.
Thus, for all sufficiently large integers $N$, a nonzero polynomial
$F_N$ of $W$-degree at most $N$ satisfies all these conditions.
Independence of the conditions is not required for this kernel argument.
On $Y\ne0$ use the commuting frame
\[
 D_0=Y\partial_Y+\sum_i\partial_{X_i},\qquad D_i=\partial_{X_i}.
\]
In the displayed local coordinates $D_0=(1+t)\partial_t$ and
$D_i=\partial_{u_i}$, independently of $y_j$.
The frame preserves the space of weighted-degree-$N$ polynomials.
Every derivative of cost at most $\sigma N$ has branch vanishing at least
$(1+2\sigma)Nh_P$. Its pole degree is at most $Nd_W(\Gamma)$.
The zero--pole equality on the smooth projective model of $\Gamma$
forces each such derivative to vanish on $\Gamma$; see
\cite[Tag \href{https://stacks.math.columbia.edu/tag/02RU}{02RU}]{stacks}.

On $Y\ne0$, use common-zero sets of derivatives of costs at most
$r\sigma N/\ell_0$, for $0\leq r\leq\ell_0$.
Choose a component at the last level containing $\Gamma\cap\{Y\ne0\}$,
then extend it backwards to a nested sequence of components at the earlier
levels. This direction of choice ensures that nesting is possible.
Their dimensions lie between $1$ and $m$, so two consecutive components
coincide. Their affine Zariski closures are components of the corresponding
polynomial zero sets, so Lemma~\ref{lem:length} applies at general points
with $Y\ne0$. On the coincident component $Z$, \eqref{eq:local} with
$\epsilon=\sigma/\ell_0$ applies to the equations of the earlier level.
Indeed, an earlier equation is $D^\alpha F_N$ with cost at most
$r\sigma N/\ell_0$. Applying a further derivative of cost strictly less
than $\sigma N/\ell_0$ gives, by commutativity, an equation vanishing at the
next level. This is exactly the hypothesis of Lemma~\ref{lem:length}.
The same lemma bounds all pairs of normal bases. There are only finitely
many index sets, so the required generic rank conditions can be imposed
on one common nonempty open subset of each particular $Z$.
If a positive coordinate direction is not tangent to $Z$, choose the largest
such index $i$. A coordinate normal basis contains $i$.
Every larger positive coordinate direction is tangent, and the corresponding
frame vector is the same coordinate vector. Such indices belong to neither
kind of normal basis. If a frame normal basis omitted $i$, its comparison
with the chosen coordinate normal basis would therefore have largest
differing positive index $i$, contradicting \eqref{eq:separation}.
Thus every frame normal basis contains $i$.
The other frame directions span the ambient hyperplane
$Q_i=\ker(dX_i-dY/Y)$ and fail to span the normal quotient.
For a hyperplane $Q_i$ in an ambient vector space and a tangent subspace
$T$, its image modulo $T$ fails to be surjective exactly when $T\subseteq Q_i$:
if $T\not\subseteq Q_i$, then $Q_i+T$ is the entire ambient space.
Consequently $T_xZ\subseteq Q_i$ at general smooth points, which proves
$dX_i=dY/Y$ on $Z$.
If $Y$ were nonconstant, restrict to an algebraic curve in $Z$ on which it
remains nonconstant and pass to a smooth complete model. The logarithmic
differential $dY/Y$ has a nonzero residue at a zero or pole of $Y$, whereas
the exact differential $dX_i$ has zero residue everywhere. This is impossible.
For precision, choose a smooth point with $dY\ne0$ and cut by linear
hyperplanes whose tangent intersection is a line on which $dY$ is nonzero.
The curve component through the point has nonconstant $Y$. On its complete
normalization, write $Y=t^r u(t)$ at a zero or pole, with $u(0)\ne0$.
Then $dY/Y=r\,dt/t+du/u$ has residue $r\ne0$. In contrast, differentiating
any Laurent series for $X_i$ produces no $t^{-1}dt$ term.
If every positive coordinate direction is tangent instead, the fact that $Z$ is a proper subvariety
forces its tangent space to be their span, again giving $dY=0$.
Thus $Y$ is constant on $Z$ and on $\Gamma$.

Here the moving-centre argument simplifies: distinct $y_j$ imply that
$\Gamma$ meets at most one centre, say $j$. Some $X_l$ is nonconstant on
$\Gamma$. The function $X_l-c_{jl}$ has pole degree at most $w_l d_W(\Gamma)$ and its zeros over
that centre have total order at least $(w_l/\theta)\sum_P h_P$. Hence
$d_W(\Gamma)\geq\theta^{-1}\sum_P h_P>(1+\sigma)\sum_Ph_P$, a contradiction.
Together with Lemma~\ref{lem:length}, this proves
\eqref{eq:curve}. The distinctness of the $y_j$ is used only in this final
fibre step; no lower bound on their distances is needed.

\subsection{The passage from curves to coefficient packets}\label{subsec:packets}
We now deduce surjectivity from the curve bound.
Choose an integer $R$ with all $R/w_a$ and $R/v_a$ positive integers.
Embed affine space by all monomials of $W$-degree at most $R$, and let $X$
be the projective closure with $L=\mathcal O_X(1)$.
For the general projective construction from a finite set of monomials,
see \cite[Chapter VI, Section 1]{mondal}.
The constant monomial and all coordinate functions occur in this map,
so it is an embedding of affine space into the chart where the constant
coordinate is nonzero. On the smooth projective model of a curve meeting
that chart, the pole divisor of the coordinate sections has coefficient
\[
 R\max\left(0,-\frac{\ord_PY}{w_0},
                 -\frac{\ord_PX_1}{w_1},\ldots,
                 -\frac{\ord_PX_m}{w_m}\right)
\]
at each point $P$. All monomials satisfy this upper bound. It is attained
by $1$ if the maximum is zero, and otherwise by a pure power of a coordinate
attaining the maximum. Hence the sections have no common zero after
twisting by this divisor, and its degree is the degree of the pullback of
$L$. In particular, $\deg L=R d_W(\Gamma)$ on the curve.

Replace each logarithm by a fixed Taylor polynomial whose first omitted
term has weight strictly greater than $v_i$. With the resulting algebraic
local coordinates $t,u'_1,\ldots,u'_m$, the contact $h_P$ is unchanged.
Indeed, write $u'_i=u_i+\tau_i(t)$ with the first possible tail degree
$T_i$ satisfying $T_iv_0>v_i$. If $t$ is nonzero on the branch, then
\[
 \frac{\ord_P\tau_i(t)}{v_i}
 \geq\frac{T_i\ord_Pt}{v_i}
 >\frac{\ord_Pt}{v_0}\geq h_P.
\]
Thus the tail changes no coordinate attaining the minimum; the same
argument for the inverse substitution proves equality of the minima.
If $t$ vanishes identically, every tail vanishes identically.
Both formal substitutions preserve the $V$-weight filtration, so they
also induce inverse maps on each finite coefficient quotient.
Put
$I_j=(t^{R/v_0},(u'_1)^{R/v_1},\ldots,(u'_m)^{R/v_m})$.
These define a coherent ideal $I$ whose quotient is supported at the centres, and
$\ord_PI=Rh_P$. Blow up this ideal: $p:X'\to X$, with
$I\mathcal O_{X'}=\mathcal O_{X'}(-E)$, and write $A=p^*L$ additively.
Here $E$ is an effective Cartier divisor and
$\mathcal O_{X'}(-E)=\mathcal O_{X'}(1)$ for the blow-up Proj construction;
see \cite[Tag \href{https://stacks.math.columbia.edu/tag/02OS}{02OS}]{stacks}.
Equation~\eqref{eq:curve} implies $A-(1+\sigma)E$ is nef on noncontracted
curves. Contracted curves have positive degree for $-E$, which is relatively
ample by \cite[Tag \href{https://stacks.math.columbia.edu/tag/02NS}{02NS}]{stacks};
curves outside the affine chart avoid $E$.
More explicitly, the normalization of a noncontracted curve upstairs maps
birationally to the normalization of its image, since the blow-up is an
isomorphism away from the finite centre. Pulling back $I$ to this normalized
curve gives an ideal generated locally by powers of a uniformizer, with
order equal to the minimum order of the original generators. Thus
$A\cdot\widetilde\Gamma=R d_W(\Gamma)$ and
$E\cdot\widetilde\Gamma=R\sum_Ph_P$. This checks that the contact quantity
being bounded is precisely the exceptional-divisor intersection, including
all branches of a singular curve.
For some integer $a>1$, $aA-E$ is ample, and
\[
 A-E=\frac{a-1}{a(1+\sigma)-1}(A-(1+\sigma)E)
       +\frac{\sigma}{a(1+\sigma)-1}(aA-E)
\]
is ample by the nef-plus-ample theorem for projective schemes.
The required theorem explicitly allows projective schemes, not just smooth
varieties; see Lazarsfeld~\cite{laz}, Corollary 1.4.10.
For the preceding existence of $aA-E$, choose $b$ with $I\otimes L^b$
globally generated. The blow-up embeds as a closed subscheme of
$X\times\mathbb P^r$ with second hyperplane bundle $bA-E$.
The restriction of the product's ample bundle is $(b+1)A-E$.

The ordinary Rees algebra gives, for sufficiently large $n$,
\[
 p_*\mathcal O_{X'}(-nE)=I^n,\qquad
 R^qp_*\mathcal O_{X'}(-nE)=0\quad(q>0).
\]
See the degree-one Proj results in
\cite[Tags \href{https://stacks.math.columbia.edu/tag/0AG6}{0AG6} and
\href{https://stacks.math.columbia.edu/tag/0AG7}{0AG7}]{stacks}.
The hypotheses hold on every affine open: the base is Noetherian and
$\bigoplus_{n\geq0}I^n$ is generated by finitely many degree-one elements.
The graded module used in 0AG7 is this Rees algebra itself.
There are finitely many affine opens in a cover, so a common eventual
threshold exists. Only eventual equality with the ordinary powers $I^n$
is used; no equality with their integral closures or small-degree equality
is being assumed.
Projection formula, Leray, and Serre vanishing for $A-E$ give
$H^1(X,I^n\otimes L^n)=0$. The precise statements used are
\cite[Tags \href{https://stacks.math.columbia.edu/tag/01E8}{01E8}
(projection formula), \href{https://stacks.math.columbia.edu/tag/01F4}{01F4}
(Leray), and \href{https://stacks.math.columbia.edu/tag/0B5U}{0B5U}
(Serre vanishing)]{stacks}.
For clarity, the Serre theorem is applied to the fixed sheaf
$\mathcal O_{X'}$ and the fixed ample line bundle
$\mathcal L=p^*L\otimes\mathcal O_{X'}(-E)$.
It is not applied directly to the varying sheaves $I^n$ on $X$.
For all sufficiently large $n$, projection formula gives
$p_*\mathcal L^n=I^n\otimes L^n$ and
$R^qp_*\mathcal L^n=0$ for $q>0$; Leray identifies its $H^1$ with that of
$\mathcal L^n$ on $X'$. This checks the fixed-sheaf hypothesis explicitly.
The exact sequence
\[
 0\longrightarrow I^n\otimes L^n\longrightarrow L^n
 \longrightarrow (\mathcal O_X/I^n)\otimes L^n\longrightarrow0
\]
therefore makes global sections of $L^n$ realize all classes modulo $I^n$.
The constant embedding coordinate trivializes $L$ on the affine chart.
The support of $\mathcal O_X/I^n$ is finite, so its global sections are
the direct sum of its local finite-length quotients; completion does not
change these quotients. Each generator of $I$ has $V$-weight $R$, hence
\[
 I_j^n\widehat{\mathcal O}_{X,a_j}
 \subseteq\{\text{formal series of $V$-weight at least $nR$}\}.
\]
This inclusion is in the required direction: the quotient by $I_j^n$
surjects onto the desired smaller coefficient quotient. Equality of these
ideals is neither asserted nor needed.
Finally, for the closed embedding $X\subset\mathbb P^N$, Serre vanishing
for its fixed ideal sheaf $\mathcal I_X$ gives
$H^1(\mathbb P^N,\mathcal I_X(n))=0$ for sufficiently large $n$.
Thus sections of $L^n$ come from homogeneous degree-$n$ polynomials in
the embedding coordinates, hence from affine polynomials of $W$-degree
at most $nR$. This last step does not assume projective normality.
Taking $H=nR$ and undoing the filtered coordinate change gives \eqref{eq:MI}.

This completes the proof of Theorem~\ref{thm:MI}. In particular, its
quantifiers are
\[
 \exists R=R(W,V)\quad\forall\text{ permitted centres }\mathbf a
 \quad\exists n_0(\mathbf a)\quad\forall n\geq n_0(\mathbf a).
\]
The centres will be fixed before this last degree parameter is chosen.

\clearpage
\part{Proof of the main theorem}\label{part:main-proof}
\section{Parameters for a fixed rational argument}\label{sec:parameters}
We begin the proof of Theorem~\ref{thm:main}. It suffices to prove its
inequality for every rational $\nu>2$: for a real exponent $\nu_0>2$,
choose rational $2<\nu<\nu_0$ and use $q^{-\nu}>q^{-\nu_0}$ for $q>1$.
Fix such a rational $\nu$ and write
\[
 r=a/b\ne1,\qquad a,b\in\Z_{>0},\qquad \gcd(a,b)=1,
 \qquad \xi=\log r.
\]
Suppose, towards a contradiction, that there are arbitrarily large
positive integers $q$ with $|\xi-p/q|\leq q^{-\nu}$ for some $p\in\Z$.
All parameters will now be chosen in their order of dependence.

\subsection{The gap and the dimension parameters}
To set the transverse jet weights $w_i/\theta$, choose
\[
 \theta=\frac{5\nu^2-4\nu+4}{(3\nu-2)^2}.
\]
To distinguish small and large transverse indices at the cutoff $AH$, set
\[
 A=\frac{2\nu}{3\nu-2}.
\]
For the fraction of rows with small transverse indices in the two-case
determinant estimate, use the threshold
\[
 \eta=\frac{(\nu-2)^2}{4\nu(3\nu-2)}.
\]
Direct algebra gives
\begin{align*}
 1-\theta&=\frac{4\nu(\nu-2)}{(3\nu-2)^2}>0,&
 A-\theta&=\frac{(\nu-2)(\nu+2)}{(3\nu-2)^2}>0,\\
 \theta-A^2&=\frac{(\nu-2)^2}{(3\nu-2)^2}>0.
\end{align*}
The margin between the principal arithmetic and analytic exponents in
the large-index case is
\[
 g:=\nu(A(1-\eta)-\theta)-(1-\theta)
 =\frac{\nu(\nu-2)^2}{2(3\nu-2)^2}>0.
\]
Also $A<1$ and $0<\eta<1$; for the latter,
$4\nu(3\nu-2)-(\nu-2)^2=11\nu^2-4\nu-4>0$.
To control the growth of the number of centres with the dimension, define
\[
 C=\frac12\left(\frac A\theta+\frac1A\right).
\]
To balance the polynomial weight against that growth, define
\[
 B=\frac12\left(A+\min\{1/C,C\theta\}\right).
\]
Since $A^2<\theta$, they satisfy
\begin{equation}\label{eq:parameter-intervals}
 \frac A\theta<C<\frac1A,\qquad
 A<B<\min\{1/C,C\theta\}.
\end{equation}
In particular, $C>1$, $CB<1$, $C\theta/B>1$, and $B/A>1$.
For the decay forced by repeated Taylor indices, set $c=\log2/4$.
For the least-common-multiple bound used to clear denominators, set
$\Lambda=4\log2$. To bound $|\log r|$ in the analytic estimates, use
$\ell_r=1+\max(a,b)$. To keep the total error below the available margins,
set
\[
 \varepsilon_0=\min(g/2,1/2).
\]
To control the logarithmic truncation orders and their tail error, choose
rational $F>2/\theta$ with $\nu/F<\varepsilon_0/3$.

Let the number $m$ of additive variables and chosen approximations range
over increasing positive integers. Take the number of centres to be
$K=\lfloor C^m\rfloor$, and give the polynomial variable $Y$ the weight
$w_0=B^{-m}$. To satisfy the interpolation volume condition, give the
local variable $t$ the weight
\[
 v_0=2K\theta^m w_0.
\]
The decay from repeated transverse indices will be measured by
\[
 L=\frac{\eta^2K\theta^m}{(m+1)v_0A^m}.
\]
Then $K(w_0/v_0)\theta^m=1/2$, so the volume condition
of Theorem~\ref{thm:MI} holds. Moreover,
\[
 \frac K{w_0}\leq(CB)^m\longrightarrow0,\qquad
 v_0\sim2(C\theta/B)^m\longrightarrow\infty,\qquad
 \frac m{v_0}\longrightarrow0,
\]
and
\[
 L=\frac{\eta^2(B/A)^m}{2(m+1)}\longrightarrow\infty.
\]
The argument $r$ has already been fixed. Hence we can fix $m$ sufficiently
large that
\begin{equation}\label{eq:dimension-choice}
 cL>2,\qquad
 \frac{\Lambda Fm+2\log2}{v_0}
 +\frac{100K\ell_r+(K-1)\log b}{w_0}<\frac{\varepsilon_0}{3}.
\end{equation}

All these parameters are fixed before the approximations are chosen and
the degree tends to infinity.

\subsection{The approximations, weights, and centres}
Under the contradiction assumption, choose $m$ approximations successively
and set
\[
 r_i=p_i/q_i,\qquad w_i=\lceil\log q_i\rceil\geq1,
 \qquad w_* =\min_iw_i.
\]
Choose each denominator large enough to meet the successive weight
thresholds of Theorem~\ref{thm:MI}, and require in addition
\begin{equation}\label{eq:weight-choice}
 \Lambda\sum_{i=1}^m\frac1{w_i}
 +\frac{\theta+\log4+\log(2K)+\nu+\log(200K\ell_r)}{w_*}
 <\frac{\varepsilon_0}{3}.
\end{equation}
This is possible: once all preceding weights are fixed, the next
threshold is finite, and arbitrarily large good denominators remain
available. A common additional lower bound on all $w_i$ ensures
\eqref{eq:weight-choice}.

Use the centres
\[
 y_j=r^j,\qquad c_{ji}=jr_i,\qquad 0\leq j<K.
\]
They have nonzero pairwise distinct $Y$-coordinates because $r>0$ and
$r\ne1$, including when $0<r<1$. Thus Theorem~\ref{thm:MI} applies.
All approximations, weights, and centres are now fixed. Only the degree
$H=nR$ will subsequently tend to infinity.

\section{A rational determinant and its lower bound}\label{sec:arithmetic}
Set
\[
 T_i=\left\lceil\frac{Fw_i}{v_0}\right\rceil,\qquad
 G_i(t)=\sum_{1\leq k<T_i}\frac{(-1)^{k+1}}k t^k,\qquad
 L_i=\lcm(1,\ldots,T_i-1),
\]
where an empty least common multiple is $1$. Since
$v_0T_i>w_i/\theta$, replacing $\log(1+t)$ by $G_i(t)$ is an
invertible filtered substitution on each coefficient quotient.

The interpolation matrix has columns $P=Y^hX^\alpha$ with
$w_0h+\wa\alpha\leq H$, and rows $\rho=(j,s,\beta)$ with
$v_0s+\wa\beta/\theta<H$. Its entries are
\begin{equation}\label{eq:entry}
 [t^su^\beta]\left(r^{jh}(1+t)^h
       \prod_i(jr_i+G_i(t)+u_i)^{\alpha_i}\right).
\end{equation}
For every sufficiently large admissible $H$, Theorem~\ref{thm:MI}
supplies a nonzero square minor $\Delta_H$ containing every row.
All its entries are rational. Write $M=M_H$ for its size and put
\[
 \bar b=\frac1{MH}\sum_\rho\wa\beta,\qquad
 D_H=\prod_iL_i^{\lfloor H/w_i\rfloor}.
\]
Weighted simplex counting gives
\begin{equation}\label{eq:count}
 M\sim\frac{K\theta^mH^{m+1}}{(m+1)!v_0\prod_iw_i},
 \qquad 0\leq\bar b\leq\theta.
\end{equation}

Scale the column indexed by $(h,\alpha)$ and the row indexed by
$(j,s,\beta)$ by, respectively,
\[
 C_{h,\alpha}=b^{(K-1)h}\prod_iq_i^{\alpha_i},\qquad
 S_{j,s,\beta}=D_H\prod_iq_i^{-\beta_i}.
\]
The scaled entry is zero when $\beta\not\leq\alpha$. Otherwise it equals
\begin{equation}\label{eq:integer-entry}
 D_Ha^{jh}b^{(K-1-j)h}\binom\alpha\beta
 [t^s](1+t)^h\prod_i(jp_i+q_iG_i(t))^{\alpha_i-\beta_i}\in\Z.
\end{equation}
Indeed, $K-1-j\geq0$, each $G_i$ has denominator dividing $L_i$, and
$\alpha_i-\beta_i\leq\lfloor H/w_i\rfloor$.
The scaled determinant is therefore a nonzero integer, and hence
\begin{align*}
 \log|\Delta_H|\geq{}&-M\log D_H
 -(K-1)\log b\sum_{\text{columns}}h\\
 &-\sum_{\text{columns}}\sum_i\alpha_i\log q_i
 +\sum_{\text{rows}}\sum_i\beta_i\log q_i.
\end{align*}

We use the elementary bound
$\log\lcm(1,\ldots,n)\leq\Lambda n$, where $\Lambda=4\log2$.
For its proof, every prime power in $(n,2n]$ contributes its prime's
logarithm to $\log\binom{2n}{n}\leq2n\log2$. Summing over dyadic
intervals up to the first power of two at least as large as the endpoint
gives the stated bound. Consequently
\[
 \frac{\log D_H}{H}
 \leq\frac{\Lambda Fm}{v_0}+\Lambda\sum_i\frac1{w_i}.
\]
Since $0\leq w_i-\log q_i<1$, each column satisfies
$\sum_i\alpha_i\log q_i\leq H$ and $h\leq H/w_0$, while
\[
 \sum_{\text{rows}}\sum_i\beta_i\log q_i
 \geq MH\bar b-\frac{MH\theta}{w_*}.
\]
We obtain the lower bound
\begin{equation}\label{eq:arith}
 \frac{\log|\Delta_H|}{MH}\geq-(1-\bar b)-E_{\rm ar},
\end{equation}
where the entire arithmetic cost for the fixed argument $r=a/b$ is
\begin{equation}\label{eq:arithmetic-error}
 E_{\rm ar}=
 \frac{\Lambda Fm}{v_0}+\Lambda\sum_i\frac1{w_i}
 +\frac\theta{w_*}+\frac{(K-1)\log b}{w_0}.
\end{equation}

\section{Translation and analytic determinant bounds}\label{sec:analytic}
\subsection{Exact translation of every row}
For each selected column $P=Y^hX^\alpha$ and multi-index $\lambda$, define
the entire function
\[
 f_{\lambda,P}(z)=[u^\lambda]P(e^z,z+u_1,\ldots,z+u_m)
 =\begin{cases}
 \binom\alpha\lambda e^{hz}z^{|\alpha|-|\lambda|},&\lambda\leq\alpha,\\
 0,&\text{otherwise}.
 \end{cases}
\]
Set
\[
 z_j(t)=j\xi+\log(1+t),\qquad
 \epsilon_{ji}=j(r_i-\xi),\qquad
 \tau_i(t)=G_i(t)-\log(1+t).
\]
The exact identities
\[
 e^{z_j(t)}=r^j(1+t),\qquad
 jr_i+G_i(t)+u_i=z_j(t)+u_i+\epsilon_{ji}+\tau_i(t)
\]
give the following finite expansion as an equality of row vectors:
\begin{align}\label{eq:translation}
 \operatorname{row}_{j,s,\beta}
 &=\sum_{\substack{\lambda\geq\beta\\\wa\lambda\leq H}}
    \binom\lambda\beta
   \sum_{0\leq d\leq\lambda-\beta}\binom{\lambda-\beta}d
      \epsilon_j^{\lambda-\beta-d}\nonumber\\
 &\quad\cdot\sum_{k=0}^s
  \left([t^k]\prod_i\tau_i(t)^{d_i}\right)
  \bigl([t^{s-k}]f_{\lambda,P}(z_j(t))\bigr)_P.
\end{align}
All scalar factors are independent of the column. A nonzero tail
coefficient requires $\sum_i d_iT_i\leq k\leq s$, so
$\wa d<H/F$. On $|t|=1/2$ we have $|\tau_i(t)|\leq1$, and
\[
 |\epsilon_{ji}|\leq Kq_i^{-\nu}\leq Ke^\nu e^{-\nu w_i}.
\]
The two binomial factors in \eqref{eq:translation} have product at most
$4^{|\lambda|}$, and $|\lambda|\leq H/w_*$. Cauchy's coefficient
estimate bounds the tail coefficient by $2^k\leq2^{H/v_0}$.
Combining these estimates, every scalar multiplying a test row is at most
\begin{equation}\label{eq:scalar}
 \exp\{-\nu\wa{(\lambda-\beta)}+HE_{\rm tr}\},\qquad
 E_{\rm tr}=\frac\nu F+\frac{\log2}{v_0}
              +\frac{\log4+\log(2K)+\nu}{w_*}.
\end{equation}
The harmless factor $2$ in $2K$ allows a uniform upper bound.
Because $w_i\geq1$, the number of choices $(\lambda,d,k)$ for each row
is at most
\[
 Q_H=(\lfloor H\rfloor+1)^{2m}(\lfloor H/v_0\rfloor+1).
\]

\subsection{Repeated transverse indices and Taylor coefficients}
Use the radius
\[
 R_r=100K\ell_r,\qquad \ell_r=1+\max(a,b).
\]
The inequality $\log x\leq x-1$, applied to $r$ or $r^{-1}$, gives
$|\xi|<\ell_r$. Hence, on $|t|\leq1/2$,
\[
 |z_j(t)|\leq(K-1)|\xi|+\log2<K\ell_r<R_r/2.
\]
For every allowed column and multi-index, on $|z|\leq R_r$,
\[
 |f_{\lambda,P}(z)|\leq
 \exp\left\{H\left(\frac{R_r}{w_0}
                 +\frac{\log(2R_r)}{w_*}\right)\right\}.
\]
Let $T$ be any matrix obtained by choosing one unscaled test row from
\eqref{eq:translation} for each original row, and let $n_\lambda$ count
the chosen rows with transverse index $\lambda$. We claim
\begin{equation}\label{eq:collision}
 |\det T|\leq
 \exp\left\{-c\sum_\lambda n_\lambda^2
                  +MH(E_{\rm hol}+\rho_H)\right\},
\end{equation}
where
\[
 E_{\rm hol}=\frac{100K\ell_r}{w_0}+\frac{\log2}{v_0}
                         +\frac{\log(200K\ell_r)}{w_*},
 \qquad \rho_H=\frac{\log M+3}{H}.
\]
To prove this, expand each $f_{\lambda,P}$ in powers of $(z/R_r)^d$.
Its Taylor coefficients have the same bound as the displayed supremum.
The test $[t^\ell]$ applied to $(z_j(t)/R_r)^d$ has modulus at most
$2^\ell2^{-d}$. If two chosen pairs $(\lambda,d)$ coincide, the
corresponding coefficient rows are identical and their determinant is zero.
Every nonzero term therefore has
\[
 \sum_\rho d_\rho\geq\sum_\lambda\binom{n_\lambda}{2}.
\]
Use half of $2^{-\sum d_\rho}$ for this inequality and sum the remaining
half geometrically. If $\mathcal D$ denotes the possible assignments in
nonzero terms, this gives
\[
 \sum_{(d_\rho)\in\mathcal D}2^{-\sum_\rho d_\rho}
 \leq\exp\left\{-c\sum_\lambda n_\lambda^2+cM\right\}
       (2+\sqrt2)^M.
\]
The coefficient determinant is bounded by $M!$ times the product of its
entry bounds. The factors $2^{\ell_\rho}$ have product at most
$\exp(MH\log2/v_0)$. Finally,
$\log M!\leq M\log M$ and $c+\log(2+\sqrt2)<3$.
These estimates prove \eqref{eq:collision}, uniformly over the selected
minor and all row choices. They also justify absolute convergence of the
infinite Taylor expansions. The constants $\log2$ here arise from the
circle $|t|=1/2$ and the geometric series estimates.

\section{The determinant contradiction}\label{sec:contradiction}
The total normalized error is
\begin{align}\label{eq:total-error}
 E&=E_{\rm ar}+E_{\rm tr}+E_{\rm hol}\nonumber\\
 &=\frac\nu F+\frac{\Lambda Fm+2\log2}{v_0}
       +\frac{100K\ell_r+(K-1)\log b}{w_0}\nonumber\\
 &\quad+\Lambda\sum_i\frac1{w_i}
       +\frac{\theta+\log4+\log(2K)+\nu+\log(200K\ell_r)}{w_*}.
\end{align}
The choices of $F$, \eqref{eq:dimension-choice}, and
\eqref{eq:weight-choice} give
\begin{equation}\label{eq:conditions}
 E<\varepsilon_0\leq\min(g/2,1/2),\qquad cL>2>1+E.
\end{equation}
We now keep every parameter fixed and let $H$ increase through the
admissible multiples of Theorem~\ref{thm:MI}.

For bounds valid at every degree, put
\[
 S=\sum_iw_i,\qquad
 L_H=L\left(1+\frac{S}{AH}\right)^{-m},\qquad
 \delta_H=\frac{\log M+\log Q_H+3}{H}.
\]
Unit-cube comparisons with weighted simplices give
\begin{align}\label{eq:lattice-bounds}
 M&\geq\frac{K\theta^mH^{m+1}}{(m+1)!v_0\prod_iw_i},\nonumber\\
 D_A(H):=\#\{\lambda\in\N^m:\wa\lambda\leq AH\}
 &\leq\frac{(AH+S)^m}{m!\prod_iw_i}.
\end{align}
For the first bound, flooring every point of the strict jet simplex gives
an admissible lattice point. For the second, the half-open unit cube based
at an admissible $\lambda$ lies in $\sum_iw_ix_i<AH+S$.

Expand $\Delta_H$ by multilinearity using \eqref{eq:translation}.
For each choice of test rows, one of two alternatives holds.
If at least $\eta M$ chosen indices satisfy $\wa\lambda\leq AH$, then
Cauchy--Schwarz and \eqref{eq:lattice-bounds} imply
\[
 \frac{\sum_\lambda n_\lambda^2}{MH}
 \geq\frac{\eta^2M}{HD_A(H)}\geq L_H.
\]
All scalar main exponents in \eqref{eq:scalar} are nonpositive because
$\lambda\geq\beta$. Equations~\eqref{eq:scalar} and \eqref{eq:collision}
therefore bound this term by a normalized exponent
$-cL_H+E_{\rm tr}+E_{\rm hol}+\rho_H$.

Otherwise more than $(1-\eta)M$ chosen indices have weight greater than
$AH$, so
\[
 \sum_\rho\wa{\lambda_\rho}>MHA(1-\eta).
\]
Discarding the nonpositive quadratic term in \eqref{eq:collision}, the
normalized exponent of this term is at most
$-\nu(A(1-\eta)-\bar b)+E_{\rm tr}+E_{\rm hol}+\rho_H$.
There are at most $Q_H^M$ terms in the expansion. Thus
\begin{equation}\label{eq:upper}
 \frac{\log|\Delta_H|}{MH}
 \leq E_{\rm tr}+E_{\rm hol}+\delta_H
       +\max\{-cL_H,-\nu(A(1-\eta)-\bar b)\}.
\end{equation}
Compare this with the lower bound \eqref{eq:arith}. Its excess over the
low-index expression is at least $cL_H-1-E-\delta_H$. For the high-index
expression, use $\bar b\leq\theta$ and $\nu>1$ to obtain
\[
 \nu(A(1-\eta)-\bar b)-(1-\bar b)
 \geq\nu(A(1-\eta)-\theta)-(1-\theta)=g.
\]
The lower bound exceeds that expression by at least $g-E-\delta_H$.
Hence the bounds are incompatible whenever
\begin{equation}\label{eq:finite-conditions}
 E+\delta_H<g,\qquad cL_H>1+E+\delta_H.
\end{equation}

For fixed parameters, $L_H\to L$ and $\delta_H\to0$, since $M$ and $Q_H$
grow at most polynomially in $H$. In view of \eqref{eq:conditions}, both
inequalities in \eqref{eq:finite-conditions} hold for every sufficiently
large admissible $H$. The nonzero minor supplied by Theorem~\ref{thm:MI}
therefore leads to a contradiction. The interpolation threshold may depend
on the centres; all of them were fixed before taking this limit.

It follows that for the fixed pair $(r,\nu)$ there are only finitely many
denominators admitting $|\log r-p/q|\leq q^{-\nu}$. This proves the
asserted threshold $Q(r,\nu)$, and the reduction at the start extends it
to every real $\nu>2$. If $\xi=p_0/q_0$ were rational, its exact
representations $(np_0)/(nq_0)$ would contradict this threshold for
arbitrarily large denominators. Thus $\xi$ is irrational.
Dirichlet's theorem gives $\mu(\xi)\geq2$, and the proved inequalities
give $\mu(\xi)\leq2$. This completes the proof of Theorem~\ref{thm:main}.
\qed

\clearpage
\part{Consequences}\label{part:consequences}
\section{Rational linear combinations and affine changes}\label{sec:consequences}
\begin{lemma}[Invariance under rational affine changes]\label{lem:affine-invariance}
If $x$ is irrational, $u\in\mathbb Q\setminus\{0\}$ and
$v\in\mathbb Q$, then $\mu(ux+v)=\mu(x)$.
\end{lemma}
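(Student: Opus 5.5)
We prove $\mu(ux+v)\leq\mu(x)$ for every irrational $x$ and every rational $u\ne0$, $v$. The reverse inequality then follows by symmetry. Indeed, $y=ux+v$ is again irrational, and $x=u^{-1}y-u^{-1}v$ is an affine change of the same kind. Write $u=u_1/u_2$ and $v=v_1/v_2$ with integers $u_1\ne0$, $u_2,v_2>0$, and set $N=u_2v_2$.

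\emph{Transfer of approximations.} Fix $\nu$ with $\nu<\mu(ux+v)$; if no such $\nu>0$ exists there is nothing to prove. Then there are infinitely many pairs $(p,q)$ with $q>0$ and $|ux+v-p/q|<q^{-\nu}$. Rearranging gives
\[
 \left|x-\frac{(p-vq)}{uq}\right|<\frac{q^{-\nu}}{|u|}.
\]
We write the approximating fraction with integer data:
\[
 \frac{p-vq}{uq}=\frac{u_2(v_2p-v_1q)}{u_1v_2q}.
\]
Its denominator is $q'=|u_1|v_2q$, and the numerator sign is adjusted so that $q'>0$. Since $q'\asymp q$ with constants depending only on $u,v$, we get $|x-p'/q'|<c\,q'^{-\nu}$ for a constant $c=c(u,v)$. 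Fractions need not be reduced in the definition of $\mu$, so this is an admissible approximation.

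\emph{Absorbing the constant.} For any $\nu'<\nu$ we have $c\,q'^{-\nu}<q'^{-\nu'}$ once $q'$ is large. We also need infinitely many distinct $q'$. There are infinitely many pairs $(p,q)$, and for each $q$ only finitely many $p$ qualify, so the values of $q$, and hence of $q'$, are unbounded. This gives $\nu'<\mu(x)$. Letting $\nu'\uparrow\nu\uparrow\mu(ux+v)$ yields $\mu(ux+v)\leq\mu(x)$; the argument also covers the case $\mu=\infty$.

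\emph{The delicate step.} The only step needing care is the bookkeeping that exact approximations give unboundedly many denominators. This rests on the observation that each $q$ supports only finitely many qualifying $p$; with that in place, the constant-loss absorption is routine. Irrationality of $ux+v$ is immediate from $u\ne0$ and the rationality of $u,v$.
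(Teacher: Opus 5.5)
Your proof is correct and follows essentially the same route as the paper: you transfer approximations through the affine map, absorb the resulting constant by slightly lowering the exponent, and obtain the reverse inequality by applying the argument to the inverse affine map. The differences are cosmetic: the paper transfers approximations from $x$ to $ux+v$ (proving $\mu(ux+v)\geq\mu(x)$ first) and bounds the reduced denominator by $B_1B_2q$, whereas you go in the opposite direction and use the exact unreduced denominator $|u_1|v_2q$.
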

\begin{proof}
Write $u=A_1/B_1$ and $v=A_2/B_2$ with $B_1,B_2>0$.
A rational approximation $p/q$ to $x$ gives the rational number
\[
 \frac{A_1B_2p+A_2B_1q}{B_1B_2q}
\]
approximating $ux+v$ with error $|u|\,|x-p/q|$. Its reduced denominator
$Q$ is at most $B_1B_2q$. For any exponent $s>0$, an error at most
$q^{-s}$ consequently gives an error at most
$|u|(B_1B_2)^sQ^{-s}$. Along infinitely many approximations tending to
the irrational number $ux+v$, the denominators $Q$ are unbounded.
For every $s'<s$, the fixed factor is absorbed by $Q^{s-s'}$ for all
sufficiently large $Q$. Taking suprema of approximation exponents gives
$\mu(ux+v)\geq\mu(x)$. Applying the same argument to the inverse affine
map gives the opposite inequality, also when either exponent is infinite.
\end{proof}

\begin{corollary}[Nonzero rational logarithmic combinations]\label{cor:log-combinations}
Let $r_1,\ldots,r_s$ be positive rational numbers and
$c_1,\ldots,c_s$ rational numbers. If
\[
 \xi=\sum_{k=1}^s c_k\log r_k\ne0,
\]
then $\mu(\xi)=2$. More generally, $\mu(u\xi+v)=2$ for every
$u\in\mathbb Q\setminus\{0\}$ and $v\in\mathbb Q$.
\end{corollary}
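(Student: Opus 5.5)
The plan is to reduce the combination $\xi$ to a single logarithm $\log r$ of a positive rational $r\ne1$, and then invoke Theorem~\ref{thm:main} together with Lemma~\ref{lem:affine-invariance}.

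First I would clear denominators in the coefficients. Let $N$ be a positive common denominator of $c_1,\ldots,c_s$, and write $Nc_k=e_k\in\Z$. Then
\[
 N\xi=\sum_{k=1}^s e_k\log r_k=\log r,\qquad r=\prod_{k=1}^s r_k^{e_k}\in\mathbb Q_{>0}.
\]
The identity holds because every $r_k$ is positive and real, so the real logarithm satisfies $\log(xy)=\log x+\log y$ and $\log(x^e)=e\log x$ for $x,y>0$ and $e\in\Z$; no branch issues arise. Since $\xi\ne0$ and $N>0$, we have $\log r\ne0$, hence $r\ne1$.

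Next, Theorem~\ref{thm:main} applies to this $r$ and gives that $\log r$ is irrational with $\mu(\log r)=2$. Then $\xi=N^{-1}\log r$ is irrational, and Lemma~\ref{lem:affine-invariance}, applied with $u=1/N$ and $v=0$, yields $\mu(\xi)=\mu(\log r)=2$.

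For the more general statement, take $u\in\mathbb Q\setminus\{0\}$ and $v\in\mathbb Q$. Then $u\xi+v=(u/N)\log r+v$ is a rational affine image of the irrational number $\log r$ with nonzero slope. One more application of Lemma~\ref{lem:affine-invariance} gives $\mu(u\xi+v)=2$; irrationality of $u\xi+v$ is automatic.

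No step is a real obstacle, since all the analytic and geometric content sits in Theorem~\ref{thm:main}. The only point requiring care is the hypothesis $\xi\ne0$: it is exactly what excludes $r=1$, the case Theorem~\ref{thm:main} leaves out. Without it, for instance $\log 2+\log 3-\log 6=0$ would be a rational number, for which $\mu$ is not defined in the paper's sense.
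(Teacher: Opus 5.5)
Your proposal is correct and matches the paper's proof: clear denominators to get $N\xi=\log r$ with $r=\prod_k r_k^{Nc_k}\in\mathbb Q_{>0}$, use $\xi\ne0$ to exclude $r=1$, and conclude with Theorem~\ref{thm:main} and Lemma~\ref{lem:affine-invariance}. You also check explicitly that $\log r$ is irrational before applying the lemma, which is the detail its hypothesis requires.
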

\begin{proof}
Choose a positive integer $D$ with $Dc_k\in\mathbb Z$ for every $k$.
The number
\[
 r_* =\prod_{k=1}^s r_k^{Dc_k}
\]
is positive and rational, including when some exponents are negative.
The real logarithm identity gives $D\xi=\log r_*$ exactly. Since
$\xi\ne0$, $r_*\ne1$. Theorem~\ref{thm:main} and
Lemma~\ref{lem:affine-invariance} now give all the conclusions.
The nonzero condition cannot be replaced merely by requiring that some
$c_k\ne0$, because different logarithmic terms can cancel.
\end{proof}

\begin{corollary}[Positive rational powers]\label{cor:rational-powers}
If $\alpha>0$, $\alpha\ne1$, and
$\alpha^D\in\mathbb Q$ for some positive integer $D$, then
$\mu(\log\alpha)=2$.
\end{corollary}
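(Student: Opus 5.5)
The plan is to reduce the statement to Theorem~\ref{thm:main} through Lemma~\ref{lem:affine-invariance}. Put $r=\alpha^D$, which lies in $\mathbb Q$ by hypothesis. Since $\alpha>0$, we also have $r>0$, so $r\in\mathbb Q_{>0}$.

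First I check that $r\ne1$. The map $x\mapsto x^D$ is strictly increasing on $(0,\infty)$ for a positive integer $D$, and $1^D=1$. Since $\alpha\ne1$, this gives $\alpha^D\ne1$. This is the only point where positivity of $\alpha$ matters. For negative $\alpha$, for example $\alpha=-1$ with $D=2$, the conclusion would fail, and the real logarithm would not even be defined.

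Next I use the real logarithm identity $\log r=\log(\alpha^D)=D\log\alpha$. It holds exactly because $\alpha>0$, so no branch issue arises. Theorem~\ref{thm:main} applied to $r$ shows that $\log r$ is irrational with $\mu(\log r)=2$. Hence $\log\alpha=D^{-1}\log r$ is irrational as well.

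Finally, Lemma~\ref{lem:affine-invariance} with $x=\log r$, $u=1/D\in\mathbb Q\setminus\{0\}$ and $v=0$ gives $\mu(\log\alpha)=\mu(\log r)=2$. One could also cite Corollary~\ref{cor:log-combinations} with $s=1$, $r_1=r$ and $c_1=1/D$, since $\xi=D^{-1}\log r\ne0$.

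None of these steps is a real obstacle. The only care needed is verifying the hypotheses $r\in\mathbb Q_{>0}\setminus\{1\}$ and $u\ne0$ before invoking the earlier results.
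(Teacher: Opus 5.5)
Your proof is correct and follows the paper's route: the paper also writes $D\log\alpha=\log(\alpha^D)$ and combines Theorem~\ref{thm:main} with Lemma~\ref{lem:affine-invariance}, here with $u=1/D$ and $v=0$. Your check that $\alpha^D\in\mathbb Q_{>0}\setminus\{1\}$, using the monotonicity of $x\mapsto x^D$ on $(0,\infty)$, supplies the one hypothesis the paper leaves implicit.
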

\begin{proof}
Use $D\log\alpha=\log(\alpha^D)$ and the preceding theorem and lemma.
\end{proof}

\section{Scope and effectivity}\label{sec:scope}
The distinct values $r^j$ are the geometric feature that permits the
direct logarithmic argument. Theorem~\ref{thm:MI} allows arbitrary
additive coordinates because a curve with constant multiplicative
coordinate meets at most one centre. Appendix~\ref{app:fibres} treats
repeated multiplicative coordinates under an additional fibre-volume
condition and separation of the additive coordinates within each fibre.
Appendix~\ref{app:volume-control} shows that the volume inequality alone
cannot replace the separation hypotheses.

The integer determinant argument uses that $r$ is rational. For a general
algebraic argument, clearing denominators would give an algebraic integer
whose size in one embedding need not be bounded below by $1$. A norm
argument would therefore require estimates for the conjugate determinants
as well. No such estimates are used here. Corollary~\ref{cor:rational-powers}
covers the indicated algebraic arguments through an exact logarithm
identity, without this additional arithmetic step.

The proof is qualitative in the denominator threshold $Q(r,\nu)$.
After the approximations and their weights have been chosen, the final
limit requires only the existence of an interpolation threshold for that
fixed centre configuration. Appendix~\ref{app:degree} gives an explicit
degree bound for the analytic contradiction once all numerical parameters
are fixed. It does not estimate the centre-dependent Serre-vanishing
threshold in Theorem~\ref{thm:MI}, and therefore does not make the
denominator threshold in Theorem~\ref{thm:main} effective.

\clearpage
\appendix
\part*{Appendices}
\phantomsection
\addcontentsline{toc}{part}{Appendices}
\section{An explicit bound for the final degree}\label{app:degree}
This bound quantifies the analytic and counting errors after all parameters
have been fixed. It does not bound the geometric interpolation threshold.
\begin{proposition}\label{prop:finite-degree}
In the setting of Section~\ref{sec:contradiction}, let
\[
 \gamma=\min\{g-E,cL-1-E\}>0,\qquad
 a_0=\max\{1,v_0^{-1}\},\qquad C_0=\log K+2\log a_0+3.
\]
Every $H$ satisfying
\begin{equation}\label{eq:analytic-threshold}
 H>\max\left\{1,\frac{2cmLS}{A\gamma},
               \frac{16(3m+2)^2}{\gamma^2},\frac{4C_0}{\gamma}\right\}
\end{equation}
satisfies \eqref{eq:finite-conditions}. These bounds are independent of
the selected nonzero minor and its row expansion.
\end{proposition}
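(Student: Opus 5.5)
The plan is to derive both inequalities in \eqref{eq:finite-conditions} from the single margin $\gamma$. By definition $\gamma\leq g-E$ and $\gamma\leq cL-1-E$, so it suffices to show
\[
 \delta_H<\gamma/2\qquad\text{and}\qquad c(L-L_H)<\gamma/2 .
\]
Then $E+\delta_H<E+\gamma/2<g$. Likewise $cL_H>cL-\gamma/2\geq1+E+\gamma/2>1+E+\delta_H$. The two estimates use separate hypotheses in \eqref{eq:analytic-threshold}, and neither depends on which minor or which row expansion was selected.

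\emph{The $L_H$ estimate.} Bernoulli's inequality $(1+x)^{-m}\geq1-mx$ for $x\geq0$ gives $L_H\geq L\bigl(1-mS/(AH)\bigr)$. Hence $c(L-L_H)\leq cLmS/(AH)$, and this is below $\gamma/2$ exactly when $H>2cmLS/(A\gamma)$.

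\emph{The $\delta_H$ estimate.} I would bound $M$ by the total number of rows, not by the asymptotic \eqref{eq:count}. A row $(j,s,\beta)$ has $j<K$ and $s<H/v_0$. Since $w_i\geq1$ and $\theta<1$, each $\beta_i<\theta H/w_i<H$. So
\[
 M\leq K(H+1)^m(H/v_0+1)\leq Ka_0(H+1)^{m+1},
\]
using $H/v_0+1\leq a_0H+1\leq a_0(H+1)$. In the same way, $Q_H\leq a_0(H+1)^{2m+1}$. Therefore
\[
 \log M+\log Q_H+3\leq C_0+(3m+2)\log(H+1).
\]
The elementary inequality $\log(1+H)\leq\sqrt H$ for $H\geq1$ holds with equality-free margin at $H=1$, and the derivative of $\sqrt H-\log(1+H)$ is nonnegative because $1+H\geq2\sqrt H$. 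It gives $(3m+2)\log(H+1)/H\leq(3m+2)/\sqrt H$, which is below $\gamma/4$ once $H>16(3m+2)^2/\gamma^2$. The remaining term satisfies $C_0/H<\gamma/4$ once $H>4C_0/\gamma$. Adding the two gives $\delta_H<\gamma/2$.

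\emph{Expected obstacle.} The only delicate point is the one that produces the constant $16$: the crude row count must be organised so that the total power of $(H+1)$ is exactly $3m+2$, namely $m+1$ from $M$ and $2m+1$ from $Q_H$. The factors of $a_0$ must be absorbed into $C_0$. The sharp form $\log(1+H)\leq\sqrt H$ must be used rather than a weaker comparison such as $2\sqrt H$, which would give the constant $64$ instead of $16$. Positivity of $\gamma$ is already supplied by \eqref{eq:conditions}.
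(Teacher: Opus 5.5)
Your proposal is correct and follows essentially the same route as the paper's proof. You split $\gamma$ into two halves and bound $\delta_H$ via the crude box counts $M\leq Ka_0(H+1)^{m+1}$ and $Q_H\leq a_0(H+1)^{2m+1}$ together with $\log(1+H)\leq\sqrt H$. You control $c(L-L_H)$ by Bernoulli's inequality and then conclude exactly as the paper does.
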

\begin{proof}
The box bounds and $w_i\geq1$ give
\begin{align*}
 M&\leq K(H/v_0+1)\prod_i(\theta H/w_i+1)
      \leq K a_0(H+1)^{m+1},\\
 Q_H&\leq a_0(H+1)^{2m+1}.
\end{align*}
Consequently, for $H\geq1$,
\[
 \delta_H\leq
 \frac{(3m+2)\log(H+1)+C_0}{H}
 \leq\frac{3m+2}{\sqrt H}+\frac{C_0}{H}.
\]
Here $\log(1+H)\leq\sqrt H$: the difference has nonnegative derivative
because $H+1\geq2\sqrt H$, and is positive at $H=1$.
The last two terms in the maximum \eqref{eq:analytic-threshold} imply
$\delta_H<\gamma/2$.
The elementary inequality $(1+x)^{-m}\geq1-mx$ for $x\geq0$ gives
\[
 c(L-L_H)\leq\frac{cmLS}{AH}<\frac\gamma2.
\]
It follows that
$cL_H>cL-\gamma/2\geq1+E+\gamma/2>1+E+\delta_H$ and
$E+\delta_H<E+\gamma/2<g$, as required.
\end{proof}

\section{Interpolation with bounded fibre multiplicity}\label{app:fibres}
The following extension records the more general geometric statement.
The direct proof of Theorem~\ref{thm:main} uses only the distinct-$Y$
case already established in Theorem~\ref{thm:MI}.

\begin{theorem}[Separated-weight interpolation with bounded fibre multiplicity]
\label{thm:fibre-interpolation}
Fix integers $m,K\geq1$ and $1\leq\kappa\leq K$, and positive rational
$w_0,v_0,\theta$ satisfying
\begin{equation}\label{eq:fibre-volumes}
 0<\theta<1,\qquad
 K(w_0/v_0)\theta^m<1,\qquad \kappa\theta^m<1.
\end{equation}
There are successive lower thresholds for positive rational
$w_1,\ldots,w_m$, depending only on these fixed data and earlier weights,
with the following property. Set
$W=(w_0,\ldots,w_m)$ and $V=(v_0,w_1/\theta,\ldots,w_m/\theta)$.
For any centres
\[
 a_j=(y_j,c_{j1},\ldots,c_{jm})\in\C^*\times\C^m,\qquad 0\leq j<K,
\]
assume that at most $\kappa$ indices have any given value of $y_j$ and that
\begin{equation}\label{eq:within-fibre}
 j\ne l,\quad y_j=y_l
 \quad\Longrightarrow\quad
 c_{ji}\ne c_{li}\quad\hbox{for every }1\leq i\leq m.
\end{equation}
Then the map \eqref{eq:MI} is onto for every $H=nR$ with $n\geq n_0$.
Here a positive integer $R$ can be chosen using only $W,V$, before the
centres are specified, whereas $n_0$ may depend on the centres as well as
all previously fixed data.
\end{theorem}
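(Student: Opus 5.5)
The plan is to follow the proof of Theorem~\ref{thm:MI} verbatim, with one change: the final fibre step of Section~\ref{subsec:moving-curves} is replaced by a fibre-volume argument. The sequence of steps is the same as before. We choose $\sigma$ with $(1+3\sigma)^{m+1}K(w_0/v_0)\theta^m<1$ and also $(1+\sigma)\kappa\theta^m<1$ and $(1+\sigma)\theta<1$, which is possible by \eqref{eq:fibre-volumes}. The separation thresholds \eqref{eq:separation} are imposed exactly as before, so they again depend only on the fixed data and earlier weights. We then aim for the curve bound \eqref{eq:curve}. Once \eqref{eq:curve} is available, Section~\ref{subsec:packets} applies word for word: the blow-up, nef-plus-ample and Serre vanishing use nothing about the centres beyond finiteness of the support of $\mathcal O_X/I$. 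In particular, $R$ is chosen from $W,V$ alone and $n_0$ depends on the centres.

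For the curve bound, suppose \eqref{eq:curve} fails for $\Gamma$. The auxiliary polynomial $F_N$, the nested derivative loci, Lemma~\ref{lem:length}, and the residue argument show, exactly as before, that $Y$ is constant on $\Gamma$, say $Y\equiv y$. None of these steps used the centre configuration. Now $\Gamma$ may pass through up to $\kappa$ centres $a_j$ with $y_j=y$. The available structure is the fibre $\{Y=y\}\cong\C^m$ with weights $w_1,\ldots,w_m$ and jet weights $w_i/\theta$. Here $t\equiv0$ on $\Gamma$, so $h_P=\min_i\ord_P(X_i-c_{ji})/v_i$.

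The main obstacle is to prove $d_W(\Gamma)\geq(1+\sigma)\sum_Ph_P$ inside this fibre. A single coordinate no longer suffices, because zeros of $X_l-c_{jl}$ at different centres can accumulate. I would first re-run the separated-weight argument inside the fibre. Take a nonzero polynomial in $X_1,\ldots,X_m$ alone, of weighted degree at most $N$, with $V$-order at least $(1+3\sigma)N$ at the $\le\kappa$ centres of the fibre. Counting gives about $\kappa\theta^m(1+3\sigma)^m$ times the dimension, which is less than the dimension after shrinking $\sigma$ further. The derivatives are now only $D_i=\partial_{X_i}$. The nested-loci argument with Lemma~\ref{lem:length} and \eqref{eq:separation} (restricted to $A,B\subset\{1,\ldots,m\}$) then forces some $\partial_{X_i}$ to span the normal direction, giving $dX_i=0$ on the relevant component. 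Iterating this shows that the largest nonconstant coordinate direction along $\Gamma$ is tangent. Let $X_l$ be the coordinate of largest index that is nonconstant on $\Gamma$; all $X_i$ with $i>l$ are then constant on $\Gamma$.

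Hypothesis \eqref{eq:within-fibre} enters as follows. Since $c_{jl}\ne c_{j'l}$ for distinct centres in the fibre, the constancy of $X_i$ for $i>l$ means that at most one centre of the fibre can lie on $\Gamma$ when $l<m$; when $l=m$ all coordinates are available. For a single centre we conclude as in the original fibre step, via $d_W(\Gamma)\geq\theta^{-1}\sum_P h_P$. In the remaining multi-centre case, the distinct values $c_{jl}$ make the functions $X_l-c_{jl}$ vanish at disjoint point sets. Summing zeros over the centres therefore gives $w_l d_W(\Gamma)\geq (w_l/\theta)\sum_Ph_P$ once more, because each function $X_l-c$ with $c$ ranging over distinct values contributes zeros bounded by the common pole degree only after combining them into $\prod_j(X_l-c_{jl})$. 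This is where $\kappa\theta^m<1$ should be needed rather than $\theta<1$. If the naive product estimate loses a factor $\kappa$, I would instead rely on the in-fibre auxiliary-polynomial route above, which uses $\kappa\theta^m<1$ directly. That route is the step I expect to require the most care.
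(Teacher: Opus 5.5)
There is a genuine gap, in exactly the case you flag: $X_m$ nonconstant on $\Gamma$, with $\Gamma$ passing through two or more centres of its fibre. Let $J\subseteq J_y$ be the set of fibre centres on $\Gamma$. Your product estimate is false. The function $\prod_{j\in J}(X_l-c_{jl})$ has pole degree up to $|J|\,w_l d_W(\Gamma)$, not $w_l d_W(\Gamma)$. So summing zeros only gives $|J|\,d_W(\Gamma)\geq\theta^{-1}\sum_Ph_P$. To contradict the assumed violation of \eqref{eq:curve} you would then need $(1+\sigma)\kappa\theta\leq1$. This does not follow from $\kappa\theta^m<1$ once $m\geq2$: for example $\kappa=2$, $\theta=0.6$, $m=2$ satisfies $\kappa\theta^m<1$ but not $\kappa\theta<1$. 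As written, the proposal does not establish \eqref{eq:curve} in this case, and the fallback you mention is never carried to a conclusion. Note also that $\kappa\theta^m<1$ never enters a zero--pole comparison; it is used only in the dimension count for the in-fibre auxiliary polynomial.

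The missing step is one you already have the pieces for. Your in-fibre argument (nested loci, Lemma~\ref{lem:length}, \eqref{eq:separation}) ends with some coordinate $X_i$ constant on the persistent component $Z'\supseteq\Gamma$. That index can be any $i$, not necessarily $m$ or $l$. Hypothesis \eqref{eq:within-fibre} requires $c_{ji}\ne c_{li}$ in \emph{every} coordinate. So constancy of that one $X_i$ already forces $\Gamma$ to meet at most one centre of the fibre. The single-centre bound $d_W(\Gamma)\geq\theta^{-1}\sum_Ph_P>(1+\sigma)\sum_Ph_P$ then gives the contradiction. This is the paper's route, and it means the multi-centre case never arises. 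You should drop the case split on the largest nonconstant index $l$, along with the unjustified ``iteration''.

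Three smaller repairs are also needed:
\begin{itemize}
\item Impose $(1+3\sigma)^m\kappa\theta^m<1$ when $\sigma$ is first chosen. $C_*$, and hence the weight thresholds, depend on $\sigma$, so you cannot shrink $\sigma$ inside the curve argument after the weights are fixed.
\item For $m=1$ the nested-loci step is vacuous. Argue directly instead: the nonzero $G_N\in\C[X_1]$ cannot vanish on the fibre line $\Gamma$.
\item Section~\ref{subsec:packets} needs more than finiteness of the support. It uses that the centres are distinct as points, so that $\mathcal O_X/I^n$ splits into the required direct sum. Here distinctness follows from \eqref{eq:within-fibre}.
\end{itemize}
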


\begin{proof}
Choose rational $\sigma>0$ so that
\begin{equation}\label{eq:fibre-sigma}
 \begin{gathered}
 (1+3\sigma)^{m+1}K(w_0/v_0)\theta^m<1,\\
 (1+3\sigma)^m\kappa\theta^m<1,\qquad (1+\sigma)\theta<1.
 \end{gathered}
\end{equation}
Such a choice exists by the strict inequalities \eqref{eq:fibre-volumes}.
Using this choice of $\sigma$, take
\[
 \ell_0=m+2,\qquad
 C_*=\max_{1\leq k\leq m}2k!(\ell_0/\sigma)^k,
\]
as in Section~\ref{subsec:moving-curves}.
Enforce \eqref{eq:separation} by the
successive weight choices already justified there. This choice of weights
has no dependence on any centre.

We prove the curve inequality \eqref{eq:curve} for the present centres.
Suppose an irreducible curve $\Gamma$ violates it. Dimension counting
produces $F_N$ for large $N$: the ratio of conditions to coefficients
tends to
$(1+3\sigma)^{m+1}K(w_0/v_0)\theta^m<1$.
By Lemma~\ref{lem:moving-chart}, the derivative costs and degree bounds are
identical at all the centres. The zero--pole argument, persistent-component
construction, and normal-basis comparison of
Section~\ref{subsec:moving-curves} therefore apply unchanged up to the
conclusion that $Y$ is constant on $\Gamma$.
Explicitly, the persistent component has
$\prod_{a\in A}w_a\leq C_*\prod_{b\in B}v_b$ for every pair of normal bases.
Together with \eqref{eq:separation}, this either puts all positive coordinate
directions in its tangent space or forces $dX_i=dY/Y$ for some $i$.
The residue argument in that section forces $Y$ to be constant in the
second case as well. No distinctness of the $y_j$ or of the $c_{ji}$ was
used in this part of the proof.

Write this constant as $y$. Since a violating curve meets a centre,
$y\ne0$ and
$J_y=\{j:y_j=y\}$ is nonempty, with cardinality at most $\kappa$.
On the fibre $Y=y$ the contacts are
\[
 h_P=\min_i\frac{\ord_P(X_i-c_{ji})}{v_i}
 \quad(P\text{ lying over }a_j,\ j\in J_y).
\]
All prescribed centres on $\Gamma$ belong to this one fibre. If
$|J_y|=1$, the final zero--pole comparison below applies immediately.
Otherwise use the following argument, which also works when $|J_y|=1$.

In the $m$ variables $X_i$, choose a nonzero polynomial $G_N$ of weighted
degree at most $N$ for the weights $(w_1,\ldots,w_m)$,
with $V'=(v_1,\ldots,v_m)$-order at least
$(1+3\sigma)N$ at all points $(c_{ji})_i$ with $j\in J_y$.
The ratio of conditions to coefficients tends to
\[
 |J_y|(1+3\sigma)^m
 \frac{\prod_i w_i}{\prod_i v_i}
 =|J_y|(1+3\sigma)^m\theta^m<1
\]
by \eqref{eq:fibre-sigma}. Every partial derivative of total cost at most
$\sigma N$ has order at least $(1+2\sigma)Nh_P$ on each relevant branch
and pole degree at most $Nd_W(\Gamma)$. It must vanish on $\Gamma$.
When $m=1$ this is impossible already for $G_N$ itself: a nonzero
polynomial on an affine line cannot vanish on a curve.

For $m\geq2$, use the common-zero sets in this fibre of all
$\partial_X^\gamma G_N$ with cost at most $r\sigma N/\ell_0$,
$0\leq r\leq\ell_0$. Choose nested irreducible components containing the
affine part of $\Gamma$, starting at the last level and proceeding
backwards. Their dimensions are between $1$ and $m-1$; two adjacent
components therefore coincide in $Z'$. The equations at the earlier
level, followed by any derivative of cost less than $\sigma N/\ell_0$,
vanish on $Z'$. Apply Lemma~\ref{lem:length} in this affine $m$-space,
with degree weights $w_i$, frame $\partial_{X_i}$, and costs $v_i$.
For all coordinate normal bases $A,B$ of $Z'$ it gives
\[
 \prod_{a\in A}w_a\leq C_*\prod_{b\in B}v_b.
\]
There can be only one normal-basis index set. Otherwise order two
different such sets so that the largest index in their symmetric
difference belongs to $A$; \eqref{eq:separation} is then a contradiction.
The unique set is nonempty because $Z'$ has positive codimension.
Choose $i$ in it. All other coordinate directions fail to span the
normal quotient, so $T Z'\subseteq\ker dX_i$ at its general smooth points.
Consequently $X_i$ is constant on $Z'$.
Condition \eqref{eq:within-fibre} implies that $Z'$ meets at most one
prescribed centre in this fibre. Hence $\Gamma$ also meets at most one.

Let that centre be $a_j$. Some $X_l$ is nonconstant on $\Gamma$, since
$Y$ is constant and $\Gamma$ is a curve. The rational function
$X_l-c_{jl}$ has pole degree at most $w_l d_W(\Gamma)$, and its zeros
over $a_j$ have total order at least
$(w_l/\theta)\sum_P h_P$. The zero--pole equality now gives
\[
 d_W(\Gamma)\geq\theta^{-1}\sum_Ph_P
 >(1+\sigma)\sum_Ph_P,
\]
contradicting the assumed violation. This proves the curve inequality
for every curve and every permitted centre configuration.

The centres are distinct as full points, by \eqref{eq:within-fibre}.
Choose $R$ with $R/w_a,R/v_a$ positive integers, using only the weights.
The Taylor cutoffs, weighted projective embedding, and local ideals
from Section~\ref{subsec:packets} are therefore available.
For each fixed configuration the curve inequality makes
$A-(1+\sigma)E$ nef, hence $A-E$ ample. Eventual recovery of the ordinary
Rees powers and Serre vanishing give
$H^1(X,I^n\otimes L^n)=0$ for all $n$ beyond a finite threshold.
Sections of $L^n$ come from embedding polynomials for all sufficiently
large $n$ as well. Taking the maximum of these thresholds gives an $n_0$
such that every $H=nR$, $n\geq n_0$, has the stated surjectivity.
All details of the ordinary-power comparison and passage to the
weighted coefficient quotient were proved in Section~\ref{subsec:packets}.
The possibly centre-dependent choices of $I$, the blow-up, and the
vanishing thresholds occur only after the weights and $R$ have been fixed.
\end{proof}

\begin{corollary}[The fixed-fibre and moving-centre cases]\label{cor:MI}
Theorem~\ref{thm:fibre-interpolation} recovers \cite[Theorem 2.1]{source}
by taking $y_j=1$ for all $j$ and $\kappa=K$.
If the $y_j$ are pairwise distinct, take $\kappa=1$.
Then $\kappa\theta^m<1$ is automatic and
\eqref{eq:within-fibre} is vacuous. Thus Theorem~\ref{thm:MI} is recovered with no restrictions
on the $c_{ji}$ beyond being complex numbers.
\end{corollary}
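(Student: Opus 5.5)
The corollary is a specialization of Theorem~\ref{thm:fibre-interpolation}, so the plan is to check its hypotheses in each of the two cases and then compare the resulting statements with the target theorems.

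\emph{Fixed-fibre case.} Take $y_j=1$ for all $j$ and $\kappa=K$. Every index then shares the single multiplicative value, so there are at most $\kappa=K$ of them, as required. Condition \eqref{eq:within-fibre} becomes the requirement that $c_{ji}\ne c_{li}$ for all $i$ and all $j\ne l$; this is the additive-coordinate separation assumed in \cite[Theorem 2.1]{source}. The conditions in \eqref{eq:fibre-volumes} reduce to $K(w_0/v_0)\theta^m<1$ and $K\theta^m<1$, the two volume hypotheses of the $\pi$ setting described in the introduction. With these choices the map \eqref{eq:MI} is the moving-centre map specialized to $y_j=1$. So the conclusion of Theorem~\ref{thm:fibre-interpolation} reads exactly as the cited statement, with the same quantifiers: weight thresholds independent of the centres, $R$ depending only on $W,V$, and $n_0$ allowed to depend on the centres. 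The one thing I would check carefully is that the formulation in \cite{source} asks for nothing beyond this separation and volume data. If it is phrased with $Y\equiv1$ eliminated, I would note that the $t$-direction and the $Y$-weight $w_0$ enter only through the first volume inequality.

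\emph{Moving-centre case.} If the $y_j$ are pairwise distinct, take $\kappa=1$. Each value of $y_j$ is attained by at most one index, so the fibre-multiplicity hypothesis holds. Condition \eqref{eq:within-fibre} has a false premise for every pair $j\ne l$, so it is vacuous and places no constraint on the $c_{ji}$. The third inequality in \eqref{eq:fibre-volumes} becomes $\theta^m<1$, which follows from $0<\theta<1$. The remaining hypotheses are exactly those of Theorem~\ref{thm:MI}, namely $0<\theta<1$ and \eqref{eq:volume}. Applying Theorem~\ref{thm:fibre-interpolation} with these data therefore gives the full statement of Theorem~\ref{thm:MI}, including the order of quantifiers, for arbitrary complex $c_{ji}$.

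I expect no real obstacle. The only point that needs attention is the order of quantifiers: with $\kappa$ fixed as $1$ or $K$, the successive weight thresholds in Theorem~\ref{thm:fibre-interpolation} depend only on $(m,K,\kappa,w_0,v_0,\theta)$. They are therefore independent of all centres, as both target statements require.
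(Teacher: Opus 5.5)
Your proposal is correct and is the paper's own argument: substitute $y_j=1,\ \kappa=K$ (respectively pairwise distinct $y_j,\ \kappa=1$) into Theorem~\ref{thm:fibre-interpolation} and check that the hypotheses, the map, and the order of quantifiers match the target statements. The paper's only addition is a remark that in the distinct-$Y$ case the second auxiliary polynomial $G_N$ is unnecessary; this concerns how the proof simplifies and is not a step your deduction needs.
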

\begin{proof}
These substitutions give precisely the indicated hypotheses and maps.
In the distinct-$Y$ case, a curve with constant $Y$ already meets at most
one centre, so the second auxiliary polynomial $G_N$ can be omitted.
\end{proof}

\section{Why the volume condition alone is insufficient}\label{app:volume-control}
There is a concrete counterexample to dropping weight separation from MI.
Take $m=K=1$, centre $(y,c)=(1,0)$, and
\[
 W=(1,1),\qquad \theta=4/5,\qquad V=(9/10,5/4).
\]
The volume ratio is $(w_0/v_0)\theta=8/9<1$.
Consider the algebraic curve $X=Y-1$. In the jet coordinates it is
\[
 Y=1+t,\qquad X=t,\qquad
 u=t-\log(1+t)=\tfrac12t^2+O(t^3).
\]
For $H=45n$, substitution on this curve gives a well-defined surjection
\[
 \mathcal J_V(45n)\longrightarrow\C[[t]]/(t^{50n}).
\]
Indeed, a discarded monomial satisfies
$(9/10)s+(5/4)\beta\geq45n$, which implies
$s+2\beta\geq50n$; and all $t^s$ with $s<50n$ are retained.
But the restriction of any $P\in\mathcal P_W(45n)$ is $P(1+t,t)$,
a polynomial of degree at most $45n$.
Its image has dimension at most $45n+1<50n$ for every $n\geq1$.
Hence this interpolation map fails to be onto for every such $H$,
despite the strict volume inequality.

This example violates weight separation and therefore does not contradict
Theorem~\ref{thm:MI}. It shows why the volume inequality cannot replace
the geometric proof in Section~\ref{sec:geometry}.

\end{document}